\documentclass[12pt]{amsart}
\usepackage[pdftex,colorlinks,citecolor=blue,urlcolor=blue]{hyperref}
\usepackage{amssymb}
\usepackage{graphicx,color}
\usepackage{amsmath}
\usepackage{comment}
\usepackage{MnSymbol}
\usepackage{cleveref}
\setlength{\textwidth}{6.5in}
\setlength{\textheight}{8.6in}
\setlength{\topmargin}{0pt}
\setlength{\headsep}{0pt}
\setlength{\headheight}{0pt}
\setlength{\oddsidemargin}{0pt}
\setlength{\evensidemargin}{0pt}
\flushbottom
\pagestyle{plain}

\numberwithin{equation}{section}

\newtheorem{theorem}{Theorem}[section]
\newtheorem{proposition}[theorem]{Proposition}
\newtheorem{question}[theorem]{Question}
\newtheorem{corollary}[theorem]{Corollary}

\newtheorem{lemma}[theorem]{Lemma}

\newtheorem{problem}[theorem]{Problem}

\theoremstyle{definition}
\newtheorem{definition}[theorem]{Definition}
\newtheorem{example}[theorem]{Example}
\newtheorem{remark}[theorem]{Remark}

\DeclareMathOperator{\lk}{\mathrm{lk}}
\DeclareMathOperator{\st}{\mathrm{st}}

\DeclareMathOperator{\Sp}{\mathbb{S}}

\newcommand{\field}{{\bf k}}

\newcommand{\R}{{\mathbb R}}

\newcommand{\Z}{{\mathbb Z}}

\newcommand{\pl}{\overset{\mathrm{PL}}{\cong}}

\title{A new family of triangulations of $\mathbb{R}P^d$}

\author[L. Venturello]{Lorenzo Venturello}
\email{lorenzo.venturello@mis.mpg.de}
\address{
	Max Planck Institute for Mathematics in the Sciences,
	Inselstr. 22,
	04103 Leipzig, GERMANY.
}
\author[H. Zheng]{Hailun Zheng}
\email{hailunz@umich.edu}
\address{
	Department of Mathematics,
	University of Michigan,
	Ann Arbor, Michigan 48109-1043, USA
}
\keywords{Real projective space, triangulations, PL manifolds}
\date{\today}



\subjclass[2010]{57Q15, 57R05}
\begin{document}
	\maketitle
	\begin{abstract}
		We construct a family of PL triangulations of the $d$-dimensional real projective space $\mathbb{R}P^d$ on $\Theta((\frac{1+\sqrt{5}}{2})^{d+1})$ vertices for every $d\geq 1$. This improves a construction due to K\"{u}hnel on $2^{d+1}-1$ vertices. 
	\end{abstract}
	\section{Introduction and main results}
	Triangulations of topological spaces play an important role in many areas of mathematics, from the more theoretical to the applied ones. 
	A classical problem in PL topology asks for the minimum number of vertices that a simplicial complex with a certain geometric realization can have. This invariant depends on the underlying topological space: triangulable spaces with complicated homology or homotopy groups tend to need more vertices in their vertex-minimal triangulations. However, to determine this number is in general very hard, even if we restrict our discussion to manifolds or \emph{PL manifolds}. One of the few families for which this number is known is that of sphere bundles over the circle. K\"{u}hnel \cite{Kuhnel86} showed that the boundary complex of the $(2d+3)$-vertex stacked $(d+1)$-manifold whose facet-ridge graph is a cycle is a PL triangulation of  $\mathbb{S}^{d-1}\times \mathbb{S}^1$ for even $d$, and a PL triangulation of the twisted bundle $\mathbb{S}^{d-1}\dtimes \mathbb{S}^1$ for odd $d$. Furthermore, K\"uhnel's triangulations are vertex-minimal. In the remaining cases, namely when $d$ is odd and the bundle is orientable, or when $d$ is even and the bundle is non-orientable, the minimum number of vertices is $2d+4$, and it is attained for every $d$ \cite{BagchiDatta-JCTA-08,ChestnutSapirSwartz}. Novik and Swartz \cite{NSw-socle} (in the orientable case) and later Murai \cite{Mur15} proved that the number of vertices $f_0(\Delta)$ of a $\field$-homology $d$-manifold $\Delta$ must satisfy $\binom{f_0(\Delta)-d-1}{2}\geq\binom{d+2}{2}\widetilde{\beta}_1(\Delta; \field)$, for $d\geq 3$ and any field $\field$. Equality in this formula is attained precisely by members in the \emph{Walkup class} which are moreover $2$-neighborly, i.e, their graphs are complete. 
	In low dimensions computational methods are a precious source. Using the program BISTELLAR \cite{BjLu} Bj\"{o}rner and Lutz constructed several vertex-minimal triangulations of $3$- and $4$-dimensional manifolds, which are collected in \cite{LutThesis}. 
	
	For a manifold whose homology (computed with coefficients in $\Z$) has a nontrivial torsion part, the Novik-Swartz-Murai lower bound is far from being tight.
	In this article we focus on the triangulations of the real projective space $\mathbb{R}P^d$. Arnoux and Marin \cite{ArMa} gave a lower bound on the number of vertices of a triangulation of $\mathbb{R}P^d$.
	\begin{theorem}\cite{ArMa}\label{thm: lower b}
		Let $\Delta$ be a triangulation of $\mathbb{R}P^d$, with $d\geq 3$. Then
		$$f_0(\Delta)\geq\binom{d+2}{2}+1.$$
	\end{theorem}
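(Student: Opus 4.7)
The plan is to exploit the universal double cover $\pi: \mathbb{S}^d \to \mathbb{R}P^d$. Given any triangulation $\Delta$ of $\mathbb{R}P^d$ on $n = f_0(\Delta)$ vertices, pulling back along $\pi$ produces a simplicial complex $\tilde{\Delta}$ which triangulates $\mathbb{S}^d$ on $2n$ vertices and carries a free simplicial $\mathbb{Z}/2$-involution $\tau$ (the deck transformation). Because $\pi$ is a local homeomorphism and open simplices are simply connected, the pullback is a genuine simplicial complex in which no simplex contains a $\tau$-antipodal pair of vertices. The whole problem then reduces to extracting a quadratic lower bound on $n$ from the existence of this free $\mathbb{Z}/2$-action on a triangulated $d$-sphere.

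As a first step I would embed $\tilde{\Delta}$ linearly in $\mathbb{R}^{2n}$ by sending its $2n$ vertices to the standard basis, then decompose $\mathbb{R}^{2n} = E^+ \oplus E^-$ into the $(\pm 1)$-eigenspaces of the linear extension of $\tau$; both summands are $n$-dimensional since $\tau$ has no fixed vertex. Projecting onto $E^- \cong \mathbb{R}^n$ yields a $\mathbb{Z}/2$-equivariant continuous map $\mathbb{S}^d \to \mathbb{R}^n$ with antipodal action on the target. A direct Borsuk--Ulam application gives only $n \geq d+1$, which is linear and much too weak, so the naive geometric embedding alone will not suffice.

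To obtain a quadratic bound I would turn to a Van Kampen--Flores / Sarkaria style argument applied to the equivariant $2$-fold deleted join of $\tilde{\Delta}$ with itself, which inherits a free $\mathbb{Z}/2$-action and whose underlying space admits an equivariant map to $\mathbb{S}^{2d+1}$. Computing the $\mathbb{Z}/2$-index on both sides, and using the crucial fact that no $\tau$-antipodal pair of vertices of $\tilde{\Delta}$ spans a simplex, should translate after a careful count of antipodal pairs of simplices in each dimension into the inequality $n \geq \binom{d+2}{2} + 1$.

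The main obstacle, as I see it, is precisely this last translation: converting the nonvanishing of an equivariant obstruction class --- essentially a top power of the generator of $H^*(B\mathbb{Z}/2;\mathbb{F}_2)$ --- into the explicit binomial inequality. This requires pairing the obstruction against the mod-$2$ fundamental class of $\mathbb{R}P^d$ and comparing with a cochain representative built from the face poset of $\Delta$, at which point the quadratic lower bound should fall out of dimension counting in the equivariant chain complex. The threshold $d\geq 3$ in the statement reflects the fact that in dimension $2$ the equivariant chain arithmetic degenerates and the bound must be checked separately.
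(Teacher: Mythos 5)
This statement is cited by the paper as a known result of Arnoux and Marin, and no proof of it is given in the paper; so there is nothing internal to compare against, and your attempt must stand on its own.

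As it stands, your proposal has a genuine gap precisely at the step you yourself flag as ``the main obstacle.'' The only part that is actually carried through is the Borsuk--Ulam argument, which you correctly note yields only $f_0(\Delta)\geq d+1$, far below the quadratic target $\binom{d+2}{2}+1$. Everything after that is a plan, not a proof: you assert that an equivariant-index computation on the deleted join of $\tilde{\Delta}$ ``should translate after a careful count of antipodal pairs of simplices'' into the binomial inequality, but no such count is performed, and it is not clear it can be. The deleted-join/Sarkaria machinery is built to certify non-embeddability of a complex into a Euclidean space (or non-existence of equivariant maps to a sphere of prescribed dimension); its output is an inequality between an index and an embedding dimension, which is again linear in nature. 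To extract a bound quadratic in $d$ on the number of \emph{vertices} of $\Delta$, you would need an additional combinatorial input---some mechanism by which the absence of antipodal simplices in $\tilde{\Delta}$ forces $\Theta(d^2)$ distinct orbits of vertices---and no such mechanism is identified. In particular, the claimed equivariant map from the deleted join of the $d$-sphere $\tilde{\Delta}$ to $\mathbb{S}^{2d+1}$ is asserted without justification (the standard Gauss-map construction requires a PL embedding of $|\tilde{\Delta}|$ into $\mathbb{R}^{2d+1}$ compatible with the simplicial structure, which is not automatic), and even granting it, the index comparison it produces would again be linear in $d$, not quadratic.

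To repair the argument you would need to replace the final speculative paragraph with an actual derivation of the quadratic bound. Arnoux and Marin's original proof does not go through deleted joins at all; it exploits the nonvanishing of the top power $\alpha^d$ of the degree-one class in $H^*(\mathbb{R}P^d;\mathbb{Z}/2)$ together with the combinatorics of the simplicial (Alexander--Whitney) cup product and a carefully chosen cocycle representative, turning the iterated cup product into a counting problem over chains of edges in the $1$-skeleton. Without some argument of that combinatorial flavor, your outline establishes only the linear lower bound, and the case $d\geq 3$ versus small $d$ that you mention is a red herring: the difficulty is not a low-dimensional degeneracy but the absence of any quadratic input.
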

	It is well known that there is a unique vertex-minimal triangulation of $\mathbb{R}P^2$ on $6$ vertices, and Walkup \cite{Walkup} proved that the number of vertices needed to triangulate $\mathbb{R}P^3$ is at least $11$, and constructed one such complex. More recently, Sulanke and Lutz obtained an enumeration of manifolds on $11$ vertices, revealing $30$ non-isomorphic such triangulations, exhibiting $5$ different $f$-vectors (see \cite[Table 10]{SuLu}). This also shows that Walkup's construction is the \emph{unique} $f$-vectorwise minimal triangulation of $\R P^3$. Computer search revealed a vertex-minimal triangulation of $\mathbb{R}P^4$ on $16$ vertices. This highly symmetric simplicial complex was studied by Balagopalan  \cite{Bal} who described three different ways to construct it. Even though the $3$- and $4$-dimensional cases suggest the formula in \Cref{thm: lower b} is tight in $d=3,4$, for $d=5$ the computer search could not find triangulations of $\mathbb{R}P^5$ on less than $24$ vertices. It is now very tempting to conjecture a tight lower bound of $\binom{d+2}{2}+\lfloor\frac{d-1}{2}\rfloor$, which fits all the known cases and reflects the fact that the number of nontrivial integral homology groups of $\R P^d$ depends on the parity of $d$. Unfortunately, in higher dimensions we don't know any triangulation of $\R P^d$ on $O(d^2)$ or even $O(d^i)$ vertices, for any $i$.  The current record is due to K\"{u}hnel \cite{Kuhnel87}. He observed that the barycentric subdivision of the boundary of the $(d+1)$-simplex possesses a \emph{free involution}, and the quotient w.r.t. the involution is PL homeomorphic to the $d$-dimensional real projective space. This construction provides a PL triangulation of $\mathbb{R}P^d$ on $2^{d+1}-1$ vertices. Indeed, a natural way to construct a triangulation of $\R P^d$ is to find a centrally symmetric (or cs, for short) triangulation of $\Sp^d$, the double cover of $\R P^d$, with an additional combinatorial condition. Let $\Delta$ be a cs simplicial complex $\Delta$ with free involution $\sigma$. We say an $n$-cycle $C$ in $\Delta$ is an \emph{induced cs cycle} if $C=\sigma(C)$ and any face of $\Delta$ with vertices in $C$ must be a face of $C$. 
	\begin{lemma}[{\cite[Proposition (8.1)]{Walkup}}]\label{lem: from sphere to rpd}
		Let $\Delta$ be a cs PL $d$-sphere with free involution $\sigma$ and with no induced cs $4$-cycle. Then $\Delta/\sigma$ is a PL triangulation of $\R P^d$.
	\end{lemma}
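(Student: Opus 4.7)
The plan is to first verify that $\Delta/\sigma$ is a well-defined abstract simplicial complex, and then that its geometric realization is PL homeomorphic to $\R P^d$. Let $\pi : \Delta \to \Delta/\sigma$ denote the projection sending a vertex $v$ to its orbit $[v] = \{v, \sigma(v)\}$, and a face $F$ to $\pi(F) = \{[v] : v \in F\}$. For this to define an abstract simplicial complex, two conditions must hold: (a) no face $F \in \Delta$ contains an antipodal pair $\{v, \sigma(v)\}$, and (b) whenever $F, F' \in \Delta$ satisfy $\pi(F) = \pi(F')$, one has $F' \in \{F, \sigma(F)\}$.

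Condition (a) is a consequence of $\sigma$ acting freely on $|\Delta|$: an edge $\{v, \sigma(v)\}$ would be setwise fixed by $\sigma$, which would then fix its midpoint. For (b), suppose it fails and set $S = \{v \in F : v \notin F'\}$, so that $\sigma(S) \subseteq F'$ and $\emptyset \subsetneq S \subsetneq F$. Pick $v \in S$ and $u \in F \setminus S$. Both $\{u, v\} \subseteq F$ and $\{u, \sigma(v)\} \subseteq F'$ are edges of $\Delta$, and applying $\sigma$ yields that $\{\sigma(u), \sigma(v)\}$ and $\{\sigma(u), v\}$ are edges as well. These four edges form a $4$-cycle on the distinct vertices $u, v, \sigma(u), \sigma(v)$, which is invariant under $\sigma$ (acting by $u \leftrightarrow \sigma(u)$, $v \leftrightarrow \sigma(v)$); by (a) the potential diagonals $\{u, \sigma(u)\}$ and $\{v, \sigma(v)\}$ are absent, making the cycle induced. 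This contradicts the hypothesis.

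Once $\Delta/\sigma$ is known to be a simplicial complex, the projection is a simplicial map inducing isomorphisms $\lk_\Delta(v) \cong \lk_{\Delta/\sigma}([v])$ for every vertex, the key point being that $\st_\Delta(v)$ and $\st_\Delta(\sigma(v))$ share no common face by (a). Since $\Delta$ is a PL $d$-sphere, each such link is a PL $(d-1)$-sphere, so $\Delta/\sigma$ is a PL $d$-manifold and $|\pi|$ realizes a PL double cover of $|\Delta/\sigma|$ by $\Sp^d$. I then finish by invoking the standard fact that any closed PL $d$-manifold admitting $\Sp^d$ as a PL double cover is PL homeomorphic to $\R P^d$, so $\Delta/\sigma \pl \R P^d$.

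The main obstacle is the case analysis in (b): the crucial move is reducing the apparent $k$-dimensional ambiguity $\pi(F) = \pi(F')$ to a two-vertex witness $\{u, v\}$ versus $\{u, \sigma(v)\}$, which together with central symmetry produces precisely the forbidden $4$-cycle. The final PL identification is routine once the simplicial and manifold structures are in place.
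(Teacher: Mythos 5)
The paper itself does not prove this lemma; it cites it verbatim as Proposition (8.1) of Walkup, so there is no in-paper argument to compare against. Evaluating your proof on its own merits:

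Your combinatorial work is sound. Conditions (a) and (b) are exactly what is needed for $\Delta/\sigma$ to be a simplicial complex with $\pi$ a two-to-one simplicial quotient, and the reduction of (b) to a two-vertex witness producing an induced cs $4$-cycle is the right idea (one should just observe that (a) guarantees $u,v,\sigma(u),\sigma(v)$ are pairwise distinct and that the only candidate chords are the two antipodal pairs). One small slip: you assert that $\st_\Delta(v)\cap\st_\Delta(\sigma(v))=\{\emptyset\}$ ``by (a).'' Condition (a) only rules out $H=\{v\}$ or $\{\sigma(v)\}$; for $H=\{w\}$ with $w\neq v,\sigma(v)$ you again need the absence of induced cs $4$-cycles (equivalently, apply (b) to $\{v,w\}$ and $\{\sigma(v),w\}$). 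The paper records this equivalence explicitly in Section~2, so the fact is true, but the justification should point to the $4$-cycle hypothesis, not (a) alone.

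The genuine gap is the last step. You invoke ``the standard fact that any closed PL $d$-manifold admitting $\Sp^d$ as a PL double cover is PL homeomorphic to $\R P^d$.'' This is not a standard fact, and in the generality stated it is false: for $d\geq 5$ there exist free PL involutions on the standard PL $d$-sphere whose quotients are fake projective spaces, homotopy equivalent but not PL homeomorphic (indeed not even homeomorphic) to $\R P^d$ --- this is the Browder--Livesay / surgery-theoretic phenomenon, and by the PL Poincar\'e conjecture the double covers of these fake $\R P^d$'s are genuine PL $\Sp^d$'s. Equivariantly triangulating such an involution and passing to the barycentric subdivision even produces a cs PL sphere with no cs $4$-cycles, so the statement cannot be reduced to pure covering-space formalities. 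Whatever Walkup's argument is, it must use more than ``the double cover is a sphere.'' As written, your final paragraph is the crux of the lemma, and it is exactly the part your proposal waves away as routine.
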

	In this article, we construct a family of PL $d$-spheres as in \Cref{lem: from sphere to rpd} for every $d\geq 0$.
	Let $F_i$ be the $i$-th Fibonacci number, i.e., $F_0=0$, $F_1=1$ and $F_n=F_{n-1}+F_{n-2}$ for every $n\geq2$. Our main result is the following.
	\begin{theorem}\label{thm: exists cs with no cycle}
		There exists a cs PL $d$-sphere $S_d$ with no cs induced $4$-cycles and $f_0(S_d)=3F_{d+1}+7F_d+3F_{d-1}-4$. Consequently, there exists a PL triangulation $\Delta_d$ of $\R P^d$ with $f_0(\Delta_d)=\frac{3}{2}F_{d+1}+\frac{7}{2}F_d+\frac{3}{2}F_{d-1}-2$.   
	\end{theorem}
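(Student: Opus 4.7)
The plan is to prove \Cref{thm: exists cs with no cycle} by induction on $d$, constructing $S_d$ from $S_{d-1}$ and $S_{d-2}$ via a cs-preserving gluing operation. Once the cs PL $d$-sphere $S_d$ is produced, the second half of the statement is automatic: because $\sigma$ is free, $f_0(\Delta_d)=f_0(S_d)/2$, and \Cref{lem: from sphere to rpd} immediately yields a PL triangulation of $\mathbb{R}P^d$. A direct check using $F_{d+1}=F_d+F_{d-1}$ shows that the claimed formula satisfies the two-step recursion
\[
f_0(S_d)=f_0(S_{d-1})+f_0(S_{d-2})+4,
\]
which points to a construction that glues (a symmetric copy of) $S_{d-1}$ with (a symmetric copy of) $S_{d-2}$ while introducing two antipodal pairs of new vertices $\{p,\sigma(p),q,\sigma(q)\}$.

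For the base cases I would take $S_0=\partial\Delta^1$ on two vertices, $S_1$ the cs hexagon, $S_2$ an explicit cs $12$-vertex triangulation of $\mathbb{S}^2$, and $S_3$ the double cover of Walkup's $11$-vertex triangulation of $\mathbb{R}P^3$; the absence of induced cs $4$-cycles is verified by inspection in each case. For the inductive step, assuming $S_{d-1}$ and $S_{d-2}$ have the required properties, I would describe the gluing as a cs analogue of handle attachment: remove an antipodal pair of facets from $S_{d-1}$ and an antipodal pair of balls built from $S_{d-2}$ (suspended by the two new pairs), and identify their boundaries equivariantly. The verifications then proceed in order: (i) $S_d$ is a simplicial complex carrying a free $\Z/2$-action extending the involutions on the pieces; (ii) $S_d$ is a PL $d$-sphere, which follows from the fact that equivariant handle attachment along PL $(d-1)$-balls preserves the PL $d$-sphere property; (iii) the edge set of $S_d$ is explicitly described in terms of those of $S_{d-1}$, $S_{d-2}$, and the cross edges introduced by the gluing; (iv) the new vertex count matches the recursion.

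The principal obstacle is step (iv)' of the induction: ruling out induced cs $4$-cycles in $S_d$. Given a putative such cycle on vertices $\{a,b,\sigma(a),\sigma(b)\}$, the argument must case-split according to where each vertex sits in the decomposition of $S_d$ into the $S_{d-1}$-part, the $S_{d-2}$-part, and the new antipodal pairs $\{p,\sigma(p),q,\sigma(q)\}$. When all four vertices lie in a single piece, the inductive hypothesis on $S_{d-1}$ or $S_{d-2}$ applies directly; when they split across pieces, one must exhibit a chord using the cross edges produced by the gluing, and when some vertex equals $p$ or $q$, one uses the specific link structure arising from the handle attachment. The construction therefore has to be engineered so that the cross edges are rich enough to cut every potentially dangerous 4-cycle, while being sparse enough not to disturb the PL sphere structure; balancing these two requirements is the delicate combinatorial design problem at the heart of the proof, and it is precisely this tension that forces the Fibonacci-type vertex count rather than a polynomial one.
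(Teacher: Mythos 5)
Your reduction to a cs PL $d$-sphere via \Cref{lem: from sphere to rpd} and your identification of the two-step recursion $f_0(S_d)=f_0(S_{d-1})+f_0(S_{d-2})+4$ are both exactly right, and match the paper. But the inductive construction you sketch has a genuine gap, and the specific mechanism you propose is unlikely to work as described.

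First, the ``cs handle attachment'' is dimensionally unclear: $S_{d-1}$ is a $(d-1)$-sphere, and removing a pair of antipodal facets leaves a $(d-1)$-manifold with boundary, not anything $d$-dimensional; it is not explained how gluing in ``suspended $S_{d-2}$-balls'' along those boundaries yields a $d$-sphere rather than another $(d-1)$-sphere. Second, and more importantly, you defer the core difficulty — ruling out induced cs $4$-cycles — to ``the delicate combinatorial design problem at the heart of the proof'' without actually specifying a construction that achieves it. The paper does not get this for free either; it introduces a carefully engineered inductive invariant (Property $\mathrm{P}_d$ in \Cref{def: P_d}), requiring a nested flag $S_0\subset\cdots\subset S_d$ where $S_{d-1}$ bounds a PL $(d-1)$-ball $B$, the complement $S_d\setminus S_{d-1}$ splits as $D\uplus\sigma(D)$, and a special vertex $v_d$ has $V(\st(v_d))\cup V(\st(\sigma(v_d)))=V(S_d)$. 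That last condition is precisely what controls which vertices can participate in a cs $4$-cycle and is the load-bearing part of \Cref{prop: no induced 4}; your outline has no analogue of it, so the case analysis you describe cannot actually be carried out.

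The paper's actual step from $S_{d-1}$ to $S_d$ is also quite different from yours: it takes the prism $S_{d-1}\times[-1,1]$, partially collapses the middle layer via the identification map $\psi$ of \eqref{eq: 3.1} (this is where Property $\mathrm{P}_{d-1}$ is used, to split $V(S_{d-1})$ into $W\uplus\sigma(W)$), triangulates the result using a centrally-symmetric locally acyclic orientation (\Cref{lem: l.a.o. for S}, \Cref{cor: exists triangulation prism}), cones over the two boundary spheres with two new antipodal pairs $v_\pm,w_\pm$, and then contracts the edges $\{(u,1),(u,-1)\}$ for $u\in V(D)\cup V(\sigma(D))$ using Nevo's link condition (\Cref{lem eran}, \Cref{lem: condition}) to stay in the PL category. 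The $2f_0(D_{d-1})$ vertices saved by these contractions, plus $f_0(S_{d-1})=2f_0(D_{d-1})+f_0(S_{d-2})$, is what turns the naive count $2f_0(S_{d-1})+4$ into the Fibonacci recursion. Finally, note that your base case $S_3=$ ``double cover of Walkup's $11$-vertex triangulation'' is an unjustified choice: Walkup's complex is one of $30$ non-isomorphic $11$-vertex triangulations of $\R P^3$, and the one produced by the paper's construction (with $f$-vector $(11,52,82,41)$) is not Walkup's $f$-vectorwise minimal one, so it is not clear Walkup's double cover even carries the structure your induction needs. In the paper no separate base case at $d=3$ is required; the induction runs from $S_0$ and $S_1$.
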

	The second statement is a direct consequence of \Cref{lem: from sphere to rpd}. Observe that $\frac{3}{2}F_{d+1}+\frac{7}{2}F_d+\frac{3}{2}F_{d-1}-2<2^{d+1}-1$ for every $d\geq 3$, hence improving K\"{u}hnel's construction in any dimension. Moreover the improvement of the bound is asymptotically significant, since $\frac{3}{2}F_{d+1}\leq \frac{3}{2\sqrt{5}}\left( \frac{1+\sqrt{5}}{2}\right)^{d+1}\sim \frac{3}{2\sqrt{5}}(1.61803\dots)^{d+1}$. 
	
	It is worth noting that in combinatorial and computational topology there are many interesting decompositions of topological spaces and manifolds, such as CW complexes, simplicial posets, graph encoded manifolds and more. Consequently, the related literature is vast. Even the expression ``triangulated manifold" may refer to objects other than abstract simplicial complexes as discussed in this paper. It is important to stress that in this article we consider exclusively objects which are simplicial complexes. For results on the minimal triangulations of $\R P^3$ (or more generally, lens spaces and other 3-manifolds) in other settings, see, for example, \cite{BasakDatta, Cas, JRT,Swartz2003}. 
	
	\section{Definitions}\label{sec: 2}
	A \emph{simplicial complex} $\Delta$ with vertex set $V=V(\Delta)$ is a collection of subsets of $V$ that is closed under inclusion. The elements of $\Delta$ are called {\em faces}. For brevity, we usually denote $\{v\}$ as $v$ and with $f_0(\Delta)$ the number $|V(\Delta)|$. The \emph{dimension of a face} $F\in\Delta$ is $\dim F:=|F|-1$. The \emph{dimension of $\Delta$}, $\dim\Delta$, is the maximum dimension of its faces. We let $f_i(\Delta)$ be the number of $i$-dimensional faces of $\Delta$, and record the numbers $(f_0(\Delta),f_1(\Delta),\dots,f_{\dim\Delta}(\Delta))$ in the so called $f$-vector of $\Delta$. 
	
	If $F$ is a face of $\Delta$, then the {\em star of $F$} and the  {\em link of $F$} in $\Delta$ are the simplicial complexes
	\[\st_\Delta(F):=\{\sigma\in\Delta \ : \  \sigma\cup F\in\Delta\} \quad \mbox{and} \quad \lk_\Delta(F):= \{\sigma\in \st_\Delta(F) \ : \ \sigma\cap F=\emptyset\}.
	\]
	If $\Delta$ and $\Gamma$ are simplicial complexes on disjoint vertex sets, then the \textit{join} of $\Delta$ and $\Gamma$ is the simplicial complex $\Delta*\Gamma = \{\sigma \cup \tau \ : \ \sigma \in \Delta \text{ and } \tau \in \Gamma\}$. In particular, if $\Delta_1=\{\emptyset, \{v\}\}$, then $\Delta_1*\Delta_2$ is called the \emph{cone} over $\Delta_2$ with apex $v$.
	
	Let $\Delta$ be a simplicial complex and $W\subseteq V(\Delta)$. The \emph{induced} subcomplex of $\Delta$ on $W$ is $\Delta_W=\{F\in\Delta: \; F\subseteq W\}$. If $\Gamma$  is a subcomplex of $\Delta$, define
	$$\Delta\setminus\Gamma=\Delta_{V(\Delta)\setminus V(\Gamma)}=\{F\in \Delta: F\cap V(\Gamma)=\emptyset\}.$$ 
	The \emph{complement} of $\Gamma$ in $\Delta$, denoted by $\Delta-\Gamma$, is the set of faces in $\Delta$ but not in $\Gamma$. The closure $\overline{\Delta-\Gamma}$ is the subcomplex of $\Delta$ generated by the facets of $\Delta-\Gamma$. If $f$ is an automorphism of $\Delta$, then the \emph{quotient} of $\Delta$ w.r.t. $f$, $\Delta/f$, is the simplicial complex obtained identifying the vertices in the same orbit and all the faces with the same vertex set. Observe that in general $|\Delta/f|\ncong|\Delta|/\tilde{f}$, where $\tilde{f}: |\Delta| \to |\Delta|$ is the continuous map induced by $f$. For instance, if $\Delta$ is a square and $f$ maps every vertex $v$ to the unique vertex not adjacent to $v$ then  $\Delta/f$ is the $1$-dimensional simplex, while $|\Delta|/\tilde{f}\cong \mathbb{S}^1$. 
	
	We say two simplicial complexes $\Delta_1$ and $\Delta_2$ are \emph{PL homeomorphic}, denoted as $\Delta_1 \pl \Delta_2$, if there exist subdivisions $\Delta_1'$ of $\Delta_1$ and $\Delta_2'$ of $\Delta_2$ that are simplicially isomorphic. A \emph{PL $d$-ball} is a simplicial complex PL homeomorphic to a $d$-simplex. Similarly, a \emph{PL $d$-sphere} is a simplicial complex PL homeomorphic to the boundary complex of a $(d+1)$-simplex. A $d$-dimensional simplicial complex $\Delta$ is called a \emph{PL $d$-manifold} if the link of every non-empty face $F$ of $\Delta$ is a $(d-|F|)$-dimensional PL ball or sphere; in the former case, we say $F$ is a \emph{boundary face} while in the latter case $F$ is an \emph{interior face}. The \emph{boundary complex} $\partial \Delta$ is the subcomplex of $\Delta$ that consists of all boundary faces of $\Delta$. A PL manifold whose geometric realization is homeomorphic to a closed manifold $M$ is called a \emph{PL triangulation} of $M$. In the literature PL manifolds as defined above are sometimes called \emph{combinatorial manifolds} or \emph{combinatorial triangulations of manifolds}.
	\begin{remark}
		For $d\geq 5$ the class of PL $d$-manifolds is strictly contained in that of triangulated $d$-manifolds. In particular, the double suspension of any homology $3$-sphere with a non-trivial fundamental group is a non-PL simplicial 5-sphere (see e.g., \cite{RoSa}).
	\end{remark}
	PL manifolds have the following nice properties, see for instance the work of Alexander \cite{Al30} and Lickorish \cite{Lickorish}:
	\begin{lemma}\label{lem: property of PL manifolds}
		Let $\Delta_1$ and $\Delta_2$ be PL $d_1$- and $d_2$-manifolds, respectively.
		\begin{enumerate}
			\item If $\Delta_1$ and $\Delta_2$ are PL balls, so is $\Delta_1*\Delta_2$.
			\item If $d_1=d_2=d$ and $\Gamma:=\Delta_1\cap \Delta_2=\partial\Delta_1\cap \partial \Delta_2$ is a PL $(d-1)$-manifold, then $\Delta_1\cup \Delta_2$ is a PL $d$-manifold. If furthermore $\Delta_2$ and $\Gamma$ are PL balls, then $\Delta_1\cup \Delta_2\pl \Delta_1$.
		\end{enumerate}
	\end{lemma}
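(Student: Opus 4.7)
My plan is to unpack the definition of PL ball/sphere as admitting a common simplicial subdivision with the (boundary of a) standard simplex, and then reduce each assertion to a manipulation of subdivisions, invoking the classical results of Alexander, Newman, and Lickorish (cited in the paper) as black boxes for the delicate uniqueness steps.

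For statement (1), the combinatorial fact I start from is that the join of simplices $\sigma_{d_1} * \sigma_{d_2}$ equals $\sigma_{d_1+d_2+1}$. By hypothesis there are subdivisions $\Delta_i'$ of $\Delta_i$ and $\Sigma_i'$ of $\sigma_{d_i}$ with $\Delta_i' \cong \Sigma_i'$ as simplicial complexes for $i=1,2$. The key auxiliary fact, which I would verify directly on geometric realizations, is that if $A'$ subdivides $A$ and $B'$ subdivides $B$ (on disjoint vertex sets), then $A' * B'$ subdivides $A * B$. Applying this to both sides, $\Delta_1' * \Delta_2'$ subdivides $\Delta_1 * \Delta_2$ while $\Sigma_1' * \Sigma_2'$ subdivides $\sigma_{d_1+d_2+1}$, and the two subdivisions are simplicially isomorphic, so $\Delta_1 * \Delta_2 \pl \sigma_{d_1+d_2+1}$.

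For the first assertion of (2), I would verify that the link of every face $F$ of $\Delta_1 \cup \Delta_2$ is a PL ball or sphere of the expected dimension, proceeding by cases on whether $F \in \Gamma$. If $F \notin \Gamma$, then $F$ lies in exactly one $\Delta_i$, so $\lk_{\Delta_1 \cup \Delta_2}(F) = \lk_{\Delta_i}(F)$ is a PL ball or sphere by assumption. If $F \in \Gamma$, then $F$ is a boundary face of both $\Delta_1$ and $\Delta_2$, so each $\lk_{\Delta_i}(F)$ is a PL $(d-|F|)$-ball; moreover $\lk_{\Delta_1 \cup \Delta_2}(F) = \lk_{\Delta_1}(F) \cup \lk_{\Delta_2}(F)$, the two pieces meeting precisely in $\lk_\Gamma(F) \subseteq \partial \lk_{\Delta_i}(F)$. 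If $F$ is interior in $\Gamma$, then $\lk_\Gamma(F)$ is a PL $(d-|F|-1)$-sphere contained in the PL sphere $\partial \lk_{\Delta_i}(F)$ of the same dimension; invariance of domain forces the two to agree, so the union of two balls along their full common boundary is a PL sphere. If $F$ is boundary in $\Gamma$, I induct on dimension: the data at the level of links is exactly the hypothesis of (2) in dimension $d-|F|$, and by the inductive second assertion one obtains a PL ball.

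For the second assertion of (2), namely $\Delta_1 \cup \Delta_2 \pl \Delta_1$, the underlying fact is that a PL $d$-ball containing a PL $(d-1)$-ball $\Gamma$ in its boundary can be PL-identified with the cylinder $\Gamma \times [0,1]$ so that $\Gamma$ corresponds to $\Gamma \times \{0\}$. Granting this, the inverse identification collapses $\Delta_2$ onto $\Gamma$, and extending by the identity on $\Delta_1$ produces the required PL homeomorphism. The main technical obstacle throughout is matching subdivisions across the interface: in (1) one must ensure the join of subdivisions is still a subdivision of the join, while in (2) the collaring PL homeomorphism of $\Delta_2$ must restrict to the identity on $\Gamma$. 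Both points are exactly where the Newman-Alexander uniqueness results for PL ball structures are invoked, and this is the reason the statement is typically attributed to the classical PL-topology literature rather than proved from scratch.
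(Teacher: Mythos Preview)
The paper does not actually prove this lemma; it merely attributes the statements to the classical PL-topology literature (Alexander \cite{Al30} and Lickorish \cite{Lickorish}) and uses them as black boxes. Your sketch is therefore not competing with a proof in the paper but supplying one where the paper gives only references.

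The outline you give is the standard route and is essentially correct: part (1) via compatibility of joins with subdivisions, the first assertion of (2) via a link-by-link case analysis with induction on dimension for boundary faces of $\Gamma$, and the second assertion of (2) via a collaring/Alexander-trick argument. The spots you flag as requiring black-box input---that the join of subdivisions subdivides the join, and that a PL $(d-1)$-ball in the boundary of a PL $d$-ball is unknotted so that $(\Delta_2,\Gamma)\pl(\Gamma\times[0,1],\Gamma\times\{0\})$---are precisely the substantive content, and they are what one finds in the cited sources. One small point worth making explicit: your inductive treatment of the boundary-face case in (2) invokes \emph{both} assertions of (2) in lower dimension, so the induction should be set up on the conjunction of the two statements; you do this implicitly, but stating it would make the argument cleaner.
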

	\begin{lemma}(Newman's theorem)\label{lem: newmann}
		Let $\Delta$ be a PL $d$-sphere and let $\Psi\subset\Delta$ be a PL $d$-ball. Then the closure of the complement of $\Psi$ in $\Delta$ is a PL $d$-ball.
	\end{lemma}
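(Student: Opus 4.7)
The second assertion of the theorem is immediate from Lemma \ref{lem: from sphere to rpd} once the first is established, so all the work is in constructing $S_d$.

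The plan is to build $S_d$ by strong induction on $d$. Writing $g(d):=3F_{d+1}+7F_d+3F_{d-1}-4$, the Fibonacci recursion $F_n=F_{n-1}+F_{n-2}$ yields $g(d)=g(d-1)+g(d-2)+4$. This strongly suggests an inductive procedure in which $S_d$ is assembled from pieces isomorphic, or closely related, to $S_{d-1}$ and $S_{d-2}$, with exactly four additional vertices arranged as two antipodal pairs $\{a,-a\},\{b,-b\}$ to keep the construction equivariant under the involution. Concretely, I would first define a centrally symmetric PL $d$-ball $B_d$ recursively (using $B_{d-1}$, $B_{d-2}$, and possibly joins with the new vertices $a$ and $b$) with an appropriate cs boundary, and then set $S_d := B_d \cup \sigma(B_d)$, glued along $\partial B_d=\partial\sigma(B_d)$. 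The base cases $d=0,1,2$ (a pair of points, a hexagon, and a $12$-vertex cs triangulation of $\mathbb{S}^2$) I would handle by hand.

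The inductive verification then proceeds in four steps: (i) $S_d$ is a PL $d$-sphere, which follows from Lemma \ref{lem: property of PL manifolds}(2) together with Lemma \ref{lem: newmann}, applied to each gluing step used to produce $B_d$ and then to the final gluing $B_d\cup\sigma(B_d)$; (ii) $S_d$ is centrally symmetric with a free involution, which is built into the construction; (iii) the vertex count is $g(d)$, which reduces to the recursion $f_0(S_d)=f_0(S_{d-1})+f_0(S_{d-2})+4$ after carefully accounting for shared boundary vertices in the gluings; and (iv) $S_d$ has no induced cs $4$-cycle.

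I expect step (iv) to be the main obstacle. Since every induced cs $4$-cycle consists of two antipodal pairs $\{v,-v\},\{w,-w\}$ with $v$ and $-v$ sharing $w$ and $-w$ as common neighbors while themselves being non-adjacent, the condition amounts to controlling the common neighborhoods of antipodal vertex pairs. In the inductive step two kinds of forbidden cycles must be ruled out: cycles whose four vertices all lie inside one of the ``old'' copies $S_{d-1}$ or $S_{d-2}$, and cycles that use some of the four new vertices $\pm a,\pm b$. The first kind is handled by the induction hypothesis, provided that the gluing does not create new edges between vertices of the same old piece outside its original edge set; this must be checked face-by-face in the construction of $B_d$. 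The second kind requires one to choose the attaching maps of $a,-a,b,-b$ so that no antipodal pair of old vertices ends up in $\lk(a)\cap \lk(-a)$ or in $\lk(b)\cap \lk(-b)$, and no mixed configuration with one new and one old antipodal pair is induced. Making sure the recursive choice of $B_d$ respects these constraints — in particular, choosing which antipodal pair of facets to remove from $S_{d-1}$ and how to attach $S_{d-2}$ and the new vertices along the exposed boundary — is where essentially all of the combinatorial care in the proof lives.
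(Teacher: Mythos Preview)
Your proposal is addressed to the wrong statement. The lemma you were asked to prove is Newman's theorem (Lemma~\ref{lem: newmann}): if $\Psi$ is a PL $d$-ball inside a PL $d$-sphere $\Delta$, then $\overline{\Delta-\Psi}$ is again a PL $d$-ball. The paper does not prove this; it is quoted as a classical result from PL topology (see the references to Alexander and Lickorish preceding Lemma~\ref{lem: property of PL manifolds}). Your write-up, by contrast, is a sketch of the main Theorem~\ref{thm: exists cs with no cycle}; you even invoke Lemma~\ref{lem: newmann} as a tool in step (i), which would be circular if that lemma were really what you were proving.

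As an outline of Theorem~\ref{thm: exists cs with no cycle} your sketch has the right shape --- the recursion $g(d)=g(d-1)+g(d-2)+4$, two new antipodal pairs, and the case analysis for ruling out induced cs $4$-cycles are all correct in spirit --- but it diverges from the paper's actual mechanism. The paper does not build $B_d$ by an ad hoc join/gluing recipe; it triangulates the prism $S_{d-1}\times[-1,1]$ via a carefully chosen locally acyclic orientation (Lemma~\ref{lem: l.a.o. for S} and Corollary~\ref{cor: exists triangulation prism}), cones off the two boundary copies with the four new vertices to get a cs PL $d$-sphere $\Phi'$, and then contracts the edges $\{(u,1),(u,-1)\}$ for $u\in V(D_{d-1})\cup V(\sigma(D_{d-1}))$ to recover the right vertex count (Proposition~\ref{cor: phi is sphere}). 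The whole construction is organised around the auxiliary Property~$\mathrm{P}_d$ (Definition~\ref{def: P_d}), which is what makes both the PL-ball verification and the no-induced-cs-$4$-cycle argument (Proposition~\ref{prop: no induced 4}) go through inductively. Your sketch does not identify this structure, and the phrase ``choosing which antipodal pair of facets to remove from $S_{d-1}$'' does not correspond to anything in the actual construction.
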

	Let $\Delta$ be a PL manifold without boundary. The \emph{prism over $\Delta$} is the pure polyhedral complex $\Delta\times [-1,1]$, whose cells are of the form $F\times\{1\}$,  $F\times\{-1\}$ or $F\times[-1,1]$, for every $F\in\Delta$. Clearly $\left|\Delta\times [-1,1] \right|\cong \left|\Delta\right|\times [-1,1]$. The boundary of the prism consists of the cells $F\times \{1\}$ and $F\times \{-1\}$, where $F\in \Delta$.
	
	A simplicial complex $\Delta$ is \textit{centrally symmetric} or {\em cs} if its vertex set is endowed with a {\em free involution} $\sigma: V(\Delta) \rightarrow V(\Delta)$ that induces a free involution on the set of all non-empty faces of $\Delta$. Let $\Delta$ be a centrally symmetric PL $d$-sphere and let $\sigma$ be the free involution on $\Delta$.  An \emph{induced cs $4$-cycle} in $\Delta$ is an induced subcomplex $C\subseteq\Delta$ isomorphic to a $4$-cycle, with $\sigma(C)=C$. In particular, the vertices of $C$ are $v,w,\sigma(v),\sigma(w)$, for some $v,w\in V(\Delta)$. The complex $\Delta$ has no induced cs $4$-cycle if and only if $\st_{\Delta}(v)\cap\st_{\Delta}(\sigma(v))=\{\emptyset\}$ for every $v\in V(\Delta)$.
	
	In what follows we define a special property in the structure of a cs PL $d$-sphere. We use $\uplus$ to denote either the disjoint union of sets, or the union of two simplicial complexes defined on disjoint vertex sets. 
	\begin{definition}[Property $\mathrm{P}_d$]
		We say that a cs PL $d$-sphere $S$ with free involution $\sigma$ satisfies \emph{Property $\mathrm{P}_d$} if there exists a sequence of subcomplexes $S_0\subset S_1\subset \dots \subset S_d=S$ such that each $S_i$ is a cs PL $i$-sphere with free involution $\sigma|_{S_i}$ that satisfies the following properties:
		\begin{itemize}
			\item[i.] $S_i=B_i\cup\sigma(B_i)$, where $B_i$ are PL $i$-balls and $B\cap\sigma(B)=\partial B = S_{i-1}$. In words, $S_{i-1}$ bounds two PL $i$-balls in $S_i$.
			\item[ii.] $S_i\setminus S_{i-1}= D_i\uplus \sigma(D_i)$, where $D_i$ is a PL $i$-ball. In words, the complement of $S_{i-1}$ in $S_i$ is the disjoint union of two PL $i$-balls.
			\item[iii.] There exists $v_i\in D_i$ such that $V(\st_{S_i}(v_i))\cup V(\st_{S_i}(\sigma(v_i)))=V(S_i)$.		
\end{itemize}
	\label{def: P_d}
	\end{definition}
	Property $\mathrm{P}_d$ describes a family of cs spheres which is the key step of our construction. We shall see in \Cref{prop: no induced 4} that these spheres do not have induced cs $4$-cycles. In the following examples we construct a $1$-sphere and $2$-sphere that satisfy Property $\mathrm{P}_1$ and $\mathrm{P}_2$, respectively.
	\begin{example}\label{ex: S_1}
		Referring to the top left complex in \Cref{fig: prism}, we let $S_0$ be the cs 0-dimensional sphere with vertices $3$ and $\sigma(3)$. We also let $v_1=1$, $D_1=\overline{\{1,2\}}$, and $B_1$ be the path $\sigma(3)-1-2-3$ whose boundary complex is $S_0$. Then the cs 6-cycle $S_1$ satisfies Property $\mathrm{P}_{1}$ w.r.t. $(B_1, D_1, v_1)$.
	\end{example}
	\begin{example}\label{ex: S_0}
		The boundary complex $S_2$ of the icosahedron is a cs $2$-sphere  on $12$ vertices which satisfies Property $\mathrm{P}_{2}$: indeed there is an induced cs 6-cycle $S_1$ in $S_2$ that divides $S_2$ into two antipodal 2-balls. In this case $S_2\setminus S_1$ is the disjoint union of two triangles $D_2, \sigma(D_2)$ and any vertex $v\in D_2$ satisfies the third condition in Definition \ref{def: P_d}.
	\end{example}
	
	\section{From a triangulation of $\Sp^{d-1}$ to a triangulation of $\Sp^{d}$}
	Given a cs PL $(d-1)$-sphere $S_{d-1}$ with $2n$ vertices without any induced cs 4-cycles, one may build a cs PL $d$-sphere with $4n+2$ vertices as follows: first build the prism over $S_{d-1}$; then triangulate the prism in such a way that no interior vertex is created and the central symmetry is preserved; finally cone over the boundaries of the prism (as two disjoint copies of $S_{d-1}$) with two new vertices. The PL $d$-sphere obtained via our construction has no induced 4-cycles. In this section, we will modify the above approach to reduce the number of vertices in the construction.
	
	In what follows, assume that $S_{d-1}$ is a cs PL $(d-1)$-sphere with involution $\sigma$ and it satisfies Property $\mathrm{P}_{d-1}$. In particular, there exists a triple $(B_{d-1},D_{d-1},v_{d-1})$ satisfying the three conditions in \Cref{def: P_d}, and $S_{d-2}:=\partial B_{d-1}$ is a cs PL $(d-2)$-sphere that satisfies Property $\mathrm{P}_{d-2}$ under the triple $(B_{d-2}, D_{d-2}, v_{d-2})$. To make the notation more compact, for the rest of this section we will denote by $S$ the cs $(d-1)$-sphere $S_{d-1}$ and by $(B,D,v)$ the triple $(B_{d-1},D_{d-1},v_{d-1})$. We will first give a triangulation $\Sigma$ of $\mathbb{S}^{d-1}\times [-1,1]$ such that
	\begin{itemize}
		\item $\Sigma$ is centrally symmetric;
		\item there exists a cs subcomplex $\Gamma\subseteq\Sigma$ with $\Gamma\cong S$ and $D\times\{1\}, \sigma(D)\times\{-1\}\subseteq \Gamma$.
	\end{itemize}
	This will be done in two steps. First we construct a polyhedral complex that satisfies the above conditions, see Proposition \ref{prop: sigma}. Then we triangulate it while preserving these properties, see Corollary \ref{cor: exists triangulation prism}. Finally, we build a cs PL $d$-sphere from $\Sigma$, see Corollary \ref{cor: phi is sphere}. 
	\subsection{The prism over $S$}
	The main point of this section is \Cref{prop: sigma}. By \emph{refinement} of a polyhedral complex $P$ we mean a polyhedral complex $\Delta$ with $|\Delta|\cong|P|$ obtained by subdividing $P$.
	\begin{proposition}\label{prop: sigma}
		Let $S$ be a cs PL $(d-1)$-sphere satisfying Property $\mathrm{P}_{d-1}$.
		There exists a refinement $\Sigma''$ of $S\times[-1,1]$ such that:
		\begin{itemize}
			\item $\Sigma''$ is centrally symmetric.
			\item There exists a cs subcomplex $\Gamma\subseteq\Sigma''$ with $\Gamma\cong S$ and $D\times\{1\}, \sigma(D)\times\{-1\}\subseteq \Gamma$.
		\end{itemize}
	\end{proposition}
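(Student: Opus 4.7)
The plan is to realize $\Gamma$ as the graph of a piecewise-linear height function on $S$ that takes the value $+1$ on $D$, the value $-1$ on $\sigma(D)$, and vanishes on $S_{d-2}$. By Property $\mathrm{P}_{d-1}$, the vertex set of $S$ partitions as $V(S)=V(D)\uplus V(\sigma(D))\uplus V(S_{d-2})$, so setting $f(v)=1$ for $v\in V(D)$, $f(v)=-1$ for $v\in V(\sigma(D))$, and $f(v)=0$ for $v\in V(S_{d-2})$ defines a function $f\colon V(S)\to\{-1,0,1\}$ satisfying $f(\sigma v)=-f(v)$. Extending $f$ affinely over each face produces a continuous $\sigma$-antisymmetric map $\tilde f\colon |S|\to [-1,1]$, and for each face $F\in S$ the slant polytope
\[
F_\Gamma\;:=\;\conv\{(v,f(v))\,:\,v\in F\}\;\subseteq\;|F|\times[-1,1]
\]
is precisely the graph of $\tilde f|_F$.

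Set $\Gamma:=\{F_\Gamma : F\in S\}$. The assignment $F\mapsto F_\Gamma$ preserves vertices, inclusions, and dimensions, so it is a simplicial isomorphism $S\cong \Gamma$. Moreover $F_\Gamma=F\times\{1\}$ whenever $F\subseteq V(D)$ and $F_\Gamma=F\times\{-1\}$ whenever $F\subseteq V(\sigma(D))$, so the required subcomplexes $D\times\{1\}$ and $\sigma(D)\times\{-1\}$ are both contained in $\Gamma$.

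To assemble $\Sigma''$, I slice each prism cell $|F|\times[-1,1]$ along $F_\Gamma$, decomposing it into the upper polytope $\conv(F_\Gamma\cup(F\times\{1\}))$ and the lower polytope $\conv(F_\Gamma\cup(F\times\{-1\}))$; one of these is degenerate exactly when $F\subseteq V(D)$ or $F\subseteq V(\sigma(D))$, in which case the cell is left unsliced. Compatibility across shared boundaries is automatic: if $G=F\cap F'$, then $G_\Gamma$ is a face of both $F_\Gamma$ and $F'_\Gamma$, so the two decompositions restrict to the same decomposition of $|G|\times[-1,1]$. The resulting cells therefore glue into a polyhedral refinement $\Sigma''$ of $S\times[-1,1]$ containing $\Gamma$ as a subcomplex.

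Central symmetry is immediate from $f(\sigma v)=-f(v)$: the map $\tau(x,t):=(\sigma x,-t)$ of $|S|\times[-1,1]$ carries $F_\Gamma$ to $\sigma(F)_\Gamma$ and exchanges the upper and lower polytopes, hence induces a free involution on $\Sigma''$ that restricts on $\Gamma$ to the original involution on $S$. The only delicate point is verifying that the cell-by-cell slicing really assembles into a bona fide polyhedral complex; this reduces to the observation that the affine extension of $f$ respects face inclusions, and I anticipate no deeper obstacle.
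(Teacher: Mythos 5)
Your slicing construction is correct and does prove the proposition as literally stated, but it diverges from the paper's construction in a way that matters downstream. The paper's route is a quotient rather than a slice: it first refines $S\times[-1,1]$ into a three-layered prism $\Sigma'$ and then identifies each middle-layer vertex $(u,0)$ with either $(u,1)$ or $(u,-1)$, depending on whether $u$ lies in $W=V(D)\uplus V(\st_{S_{d-2}}(v_{d-2}))$ or in $\sigma(W)$. This is precisely where condition iii of Property $\mathrm{P}_{d-1}$ (the distinguished vertex $v_{d-2}$) is used: it supplies the partition $V(S_{d-2})=V(\st_{S_{d-2}}(v_{d-2}))\uplus V(\st_{S_{d-2}}(\sigma(v_{d-2})))$ that tells every middle-layer vertex whether to merge upward or downward. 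The resulting $\Sigma''$ therefore has $V(\Sigma'')=V(S\times\{1\})\uplus V(S\times\{-1\})$, with no vertices at height $0$.

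Your version instead keeps $V(S_{d-2})$ at height $0$ and slices the prism along the graph of the $\{-1,0,1\}$-valued height function. This is simpler in that it uses only $V(S)=V(D)\uplus V(\sigma(D))\uplus V(S_{d-2})$ and never invokes condition iii, and the compatibility check you sketch does go through because the slicing is by the graph of an affine map on each cell. The cost, however, is that your $\Sigma''$ has $2f_0(S)+f_0(S_{d-2})$ vertices rather than $2f_0(S)$. That breaks Corollary \ref{cor: exists triangulation prism}.iii, which requires $V((S\times[-1,1])^\ell)=V(\Sigma'')$ for a triangulation $(S\times[-1,1])^\ell$ induced by a locally acyclic orientation, and such a triangulation only ever has vertices at heights $\pm1$. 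It would also spoil the vertex recursion $f_0(S_{i+1})=f_0(S_i)+f_0(S_{i-1})+4$ in Theorem \ref{thm: main}. So the argument is a valid proof of the statement in isolation, but it is not a drop-in replacement for the paper's proof; to make the two approaches agree, you would need to perturb your height function so that it takes the values $\pm 1$ on all of $V(S_{d-2})$ according to the $\st_{S_{d-2}}(v_{d-2})$/$\st_{S_{d-2}}(\sigma(v_{d-2}))$ split --- which is exactly what condition iii and the map $\psi$ accomplish in the paper.
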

	After a few intermediate steps, we will prove Proposition \ref{prop: sigma} as a corollary of Lemma \ref{lem: property of Sigma''}. We first refine $S\times[-1,1]$ to a three-layered prism over $S$. Let $\Sigma'$ be the polyhedral complex 
		$$\Sigma':=(S\times[-1,0])\cup (S\times[0,1]).$$
	In other words, $\Sigma'$ is a prism over $S$ with three layers $S\times \{-1\}$, $S\times \{0\}$ and $S\times \{1\}$. Note that $\Sigma'$ is centrally symmetric with the free involution $\sigma'$ induced by $(u,i)\mapsto(\sigma(u),-i)$, for $i=-1,0,1$. Since $S$ satisfies Property $\mathrm{P}_{d-1}$, from conditions ii and iii of \Cref{def: P_d} it follows that
	\begin{equation*}
	\begin{split}
	 V(S)&=V(D)\uplus V(\sigma(D))\uplus V(S_{d-2})\\
	 &= V(D)\uplus V(\st_{S_{d-2}}(v_{d-2})) \uplus  V(\sigma(D))\uplus V(\sigma(\st_{S_{d-2}}(v_{d-2}))).
    \end{split} 
	\end{equation*}
	Hence the following map is well-defined:
	
	\begin{align}
	\psi: V(\Sigma')&\rightarrow V(\Sigma')\nonumber\\
	(u,i)&\mapsto \begin{cases}
	(u,i) & \text{if } i\in\{-1,1\}\\
	(u,1) & \text{if } i=0 \text{ and } u\in V(D)\uplus V(\st_{S_{d-2}}(v_{d-2}))\\
	(u,-1) & \text{if } i=0 \text{ and } u\in V(\sigma(D))\uplus V(\st_{S_{d-2}}(\sigma(v_{d-2})))\\
	\end{cases},
	\label{eq: 3.1}
	\end{align}
	
	Observe that the map $\psi$ induces a map from the polyhedral complex $\Sigma'$ to itself.
	\begin{definition}\label{Sigma'}
		We define
		$$\Sigma'':=\Sigma'/\sim,$$
		where $u\sim w$ if and only if $w=\psi(u)$.
	\end{definition}

	Informally speaking, we identify two disjoint subcomplexes of the middle layer of $\Sigma'$ with the upper and lower layers respectively. In this way we obtain a polyhedral complex containing an induced subcomplex isomorphic to $S$.
	\begin{remark}
		We illustrate these maps in \Cref{fig: prism}, in which case $d=2$.  As in \Cref{ex: S_1}, We let $v_0=3$, $D=\overline{\{1,2\}}$, $B$ be the path $\sigma(3)-1-2-3$, and $S$ be the cs 6-cycle. Following \eqref{eq: 3.1}, the map $\psi$ acts as the identity on the vertices in $S \times \{1\}$ and $S \times \{-1\}$, and maps the vertices of $S\times\{0\}$ to either the upper or lower corresponding vertices. Hence, to obtain the complex $\Sigma''$ from $\Sigma'$ in \Cref{fig: prism}, we identify all pairs of boxed vertices (resp. crossed vertices) lying on the same vertical segments in $S\times [0,1]$ (resp. $S\times [-1,0]$).
	\end{remark}

	\begin{figure}[h]
		\centering
		\includegraphics[scale=0.7]{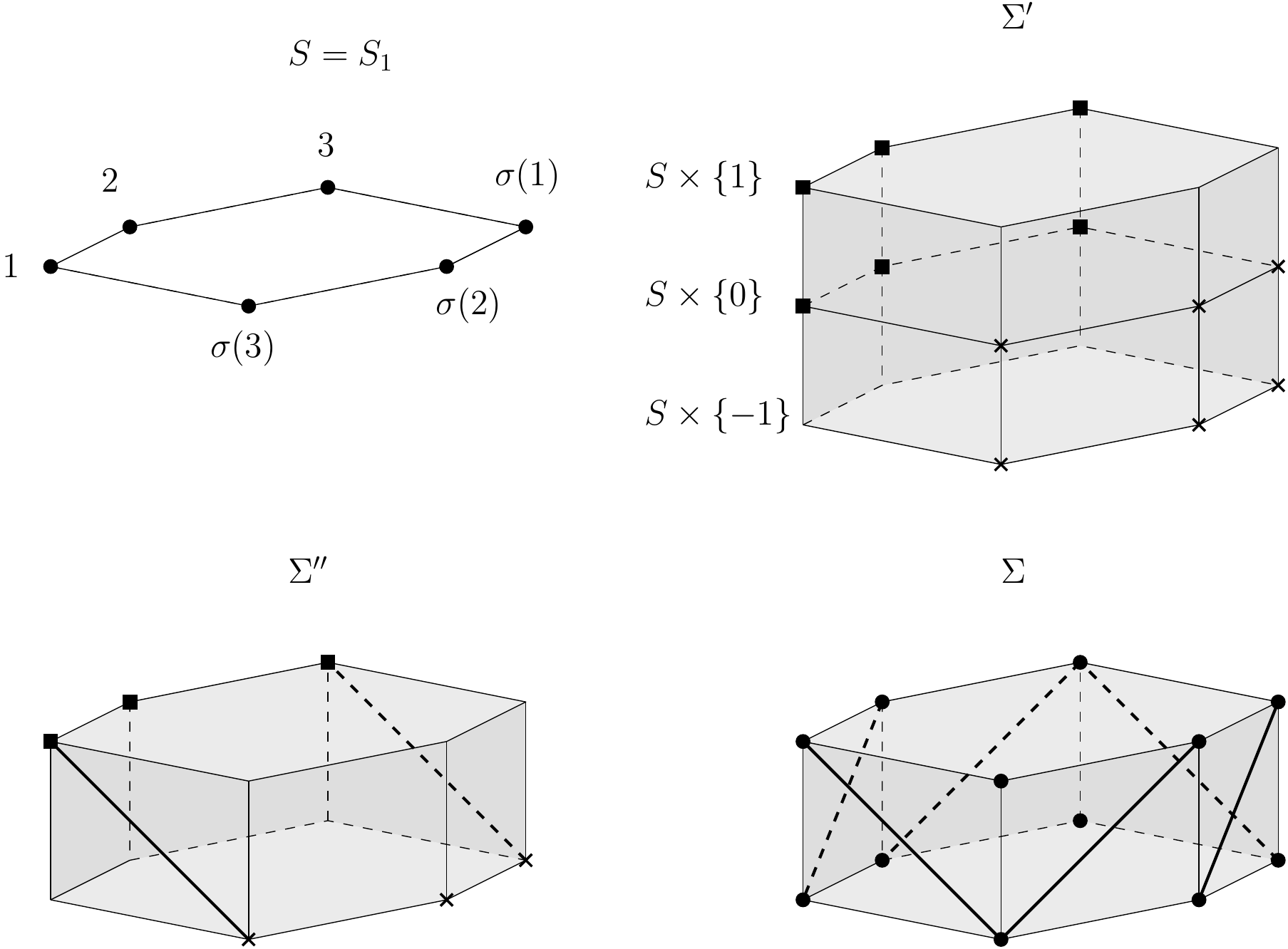}
		\caption{The complexes $\Sigma'$ and $\Sigma''$, together with a cs triangulation $\Sigma$.}
		\label{fig: prism}
	\end{figure}
	
	\begin{lemma}\label{lem: property of Sigma''}
		The complex $\Sigma''$ in \Cref{Sigma'} has the following properties:
		\begin{itemize}
			\item[i.] $\left| \Sigma''\right| \cong \mathbb{S}^{d-1}\times[-1,1]$.
			\item[ii.] $\partial\Sigma''$ consists of two copies of $S$. 
			\item[iii.] There exists an induced cs subcomplex $\Gamma\subseteq\Sigma''$ with $\Gamma\cong S$ and both $D\times\{1\}$ and $\sigma(D)\times\{-1\}$ are subcomplexes of $\Gamma$.
			\item[iv.] $\Sigma''$ is centrally symmetric under the involution $\sigma''$ induced by $\sigma'$.
			
		\end{itemize}
	\end{lemma}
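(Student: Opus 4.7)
I would prove the four properties in the order (iv), (ii), (i), (iii). Property (iv) is a direct check that $\psi$ commutes with $\sigma'$: the free involution $\sigma$ exchanges the two subsets $V(D)\cup V(\st_{S_{d-2}}(v_{d-2}))$ and $V(\sigma(D))\cup V(\st_{S_{d-2}}(\sigma(v_{d-2})))$ of $V(S)$, so applying $\sigma'$ to \eqref{eq: 3.1} swaps the two cases; hence $\sigma'$ descends to a free involution $\sigma''$ on $\Sigma''$. Property (ii) is similarly immediate since $\psi$ restricts to the identity on $V(S)\times\{\pm 1\}$, so the outer layers $S\times\{1\}$ and $S\times\{-1\}$ survive unchanged in $\Sigma''$ and, once (i) is established, form its polyhedral boundary.

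The heart of the proof is property (i). A preliminary observation is that, by the inductive fact that $S_{d-2}$ has no induced cs $4$-cycle, $V(\st_{S_{d-2}}(v_{d-2}))\cap V(\st_{S_{d-2}}(\sigma(v_{d-2})))=\emptyset$; together with condition ii of \Cref{def: P_d}, this yields the disjoint vertex-set decomposition of $V(S)$ displayed just above \Cref{Sigma'}. Writing $V(A):=V(D)\cup V(\st_{S_{d-2}}(v_{d-2}))$, each middle vertex $(u,0)$ is thereby unambiguously pulled up to $(u,1)$ or down to $(u,-1)$. I would then realize $\Sigma''$ geometrically as a polyhedral subdivision of $|S|\times[-1,1]$: send each cell of $\Sigma'$ to the convex hull of the $\psi$-images of its vertices, and verify face-by-face over each subprism $|F|\times[-1,1]$ that the resulting polytopes tile it without overlap. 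When $F\subseteq V(A)$, the three cells $F\times\{0\}$, $F\times\{1\}$, $F\times[0,1]$ collapse together onto $|F|\times\{1\}$, while $F\times[-1,0]$ stretches to the full prism $|F|\times[-1,1]$; the case $F\subseteq V(\sigma(A))$ is symmetric; for mixed $F$, the three cells $F\times[-1,0]$, $F\times\{0\}$, $F\times[0,1]$ partition the subprism into a bent middle slab sandwiched between two slanted pieces. This gives $|\Sigma''|\cong|S|\times[-1,1]$.

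Property (iii) then follows. Let $\Gamma$ be the image of the middle layer $S\times\{0\}$ in $\Sigma''$: since no two middle vertices are identified with each other, the projection $(u,\pm 1)\mapsto u$ gives a simplicial isomorphism $\Gamma\cong S$. The inclusions $D\times\{1\}$, $\sigma(D)\times\{-1\}\subseteq\Gamma$ hold by construction since $V(D)\subseteq V(A)$ and $V(\sigma(D))\subseteq V(\sigma(A))$. To see $\Gamma$ is induced, a short case analysis on the cell types of $\Sigma'$ shows that whenever the $\psi$-image of a cell has vertex set inside $V(\Gamma)$, the cell is identified in $\Sigma''$ with a middle-layer cell. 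The principal obstacle is the tiling verification inside (i), which reduces to a local convex-geometry check on each subprism and is transparent once $V(A)\uplus V(\sigma(A))=V(S)$ is in hand.
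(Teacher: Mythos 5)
Your proof is correct and takes essentially the same approach as the paper's, which defines $\Gamma$ as the $\psi$-image of the middle layer, descends $\sigma'$ to the involution $\sigma''$, reads off $\partial\Sigma''$ from the unchanged outer layers, and declares part (i) clear. Your version simply supplies more detail at the same joints --- the cell-by-cell tiling argument for (i), the explicit check that $\Gamma$ is an \emph{induced} subcomplex, and the remark that well-definedness of $\psi$ rests on $V(\st_{S_{d-2}}(v_{d-2}))$ and $V(\st_{S_{d-2}}(\sigma(v_{d-2})))$ being disjoint (i.e.\ on $S_{d-2}$ having no induced cs $4$-cycle), a fact the paper's $\uplus$-decomposition assumes implicitly.
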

	\begin{proof}
		Part i is clear, and for ii it suffices to observe that the boundary of $\partial \Sigma''$ is exactly the disjoint union of $S\times \{1\}$ and $S\times \{-1\}$.\\
		Let $\Gamma$ be the image of $S\times\{0\}$ in $\Sigma''$. Since the restriction of $\psi$ on $S\times \{0\}$ is injective, $\Gamma=\psi(S\times\{0\})\cong S$. Moreover $D\times\{1\}=\psi(D\times\{0\})\subseteq\Gamma$ and $\sigma(D)\times\{-1\}=\psi(\sigma(D)\times\{0\})\subseteq\Gamma$. This proves iii.\\
		Finally, we observe that if $F=G$ in $\Sigma''$ then $\sigma(F)= \sigma(G)$. Equivalently, the map that assigns the equivalence class of the vertex $(u,i)$ to the class of $\sigma'((u,i))$ is well defined. Therefore, it induces a free involution of $\sigma'':\Sigma''\to \Sigma''$. 
	\end{proof}
	\subsection{The cs triangulation $\Sigma$ of $\Sigma''$}
	In this subsection we construct a \emph{simplicial} complex $\Sigma$ which refines $\Sigma''$ such that 
	\begin{itemize}
		\item $\Sigma$ is a PL manifold with $V(\Sigma)=V(\Sigma'')$, i.e., no new vertex is introduced.
		\item $\Sigma$ is centrally symmetric with a free involution $\tau$, such that $\tau|_{\Sigma''}=\sigma''$. 
	\end{itemize} 
	Our construction is based on certain orientations of the graph of a simplicial complex called locally acyclic orientations. We refer to \cite[Section 7.2]{DeLoera-et-all} for a more detailed treatment of the subject. 
	\begin{definition}
		A \emph{locally acyclic orientation} (l.a.o.) of $\Delta$ is an orientation of the edges of its graph such that none of the $2$-simplices of $\Delta$ contains an oriented cycle.
	\end{definition}
	
	\begin{lemma}[{\cite[Lemma 7.2.9]{DeLoera-et-all}}]\label{triangulation of prism}
		Let $\Delta$ be a simplicial complex. The simplicial refinements of $\Delta\times[-1,1]$ are in bijection with locally acyclic orientations of $\Delta$.
	\end{lemma}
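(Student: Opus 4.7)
The plan is to establish the bijection by writing down explicit maps in both directions and then checking that they are mutually inverse. Implicit in the statement is that a \emph{refinement} of $\Delta\times[-1,1]$ here has vertex set exactly $V(\Delta)\times\{-1,1\}$, i.e.\ no new vertices are created; this is the hypothesis that makes the correspondence combinatorially rigid.

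For the forward map, given a locally acyclic orientation of $\Delta$, I would triangulate each prism $F\times[-1,1]$ by the so-called \emph{staircase}. Fix a face $F=\{v_0,\dots,v_k\}\in\Delta$. The orientation restricted to $F$ is a tournament in which every $3$-subset is acyclic, hence transitive; a classical fact about tournaments then forces this restriction to be a total order, which I write as $v_0<v_1<\cdots<v_k$. I then declare the top-dimensional simplices of $F\times[-1,1]$ to be
\[
\{(v_0,-1),\dots,(v_j,-1),(v_j,1),\dots,(v_k,1)\}, \qquad j=0,1,\dots,k.
\]
Because a subface $F'\subseteq F$ inherits its total order from that of $F$, the staircases on $F'\times[-1,1]$ and $F\times[-1,1]$ agree on the common part, so the local triangulations glue coherently into a simplicial refinement $\Sigma$ of $\Delta\times[-1,1]$ with $V(\Sigma)=V(\Delta)\times\{-1,1\}$.

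For the reverse map, given a refinement $\Sigma$ with no new vertices, each edge $\{u,v\}\in\Delta$ spans a square $\{u,v\}\times[-1,1]$ that must be split by exactly one of the diagonals $\{(u,-1),(v,1)\}$ or $\{(u,1),(v,-1)\}$; I orient $u\to v$ in the first case and $v\to u$ in the second. To see that the resulting orientation is locally acyclic, suppose for contradiction that some triangle $\{u,v,w\}\in\Delta$ carries a directed $3$-cycle, say $u\to v\to w\to u$. The three chosen diagonals on the side squares would then have to be completed to a triangulation of the triangular prism $\{u,v,w\}\times[-1,1]$ using only its six boundary vertices, and a short case analysis rules this out because the three diagonals project to three crossing chords inside the prism.

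It then remains to check that the two constructions are inverse. One direction is essentially tautological: applying the diagonal-extraction procedure to the staircase on $F\times[-1,1]$ returns the given total order on $F$. The main obstacle, and what I would spend most effort on, is the other direction: showing that once a locally acyclic orientation has been fixed and hence the square diagonals are prescribed, the refinement on each higher-dimensional prism $F\times[-1,1]$ is forced to be the staircase. I plan to prove this by induction on $\dim F$. The key observation is that among the simplices of $\Sigma$ lying in $F\times[-1,1]$, the unique one containing $(v_0,-1)$ and a top vertex $(v_k,1)$ must be the full staircase simplex $\{(v_0,-1),\dots,(v_k,-1),(v_k,1)\}$, since the prescribed diagonals together with the "no new vertex" hypothesis leave no alternative. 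Peeling off this simplex reduces $F\times[-1,1]$ to a prism of one lower dimension (after contracting the $(v_0,-1)$ cone), and the induction closes.
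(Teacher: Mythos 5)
This lemma is cited from De Loera--Rambau--Santos and the paper does not prove it; it only gives a one-sentence gloss of the bijection (an edge $i\to j$ corresponds to the diagonal $\{(i,1),(j,-1)\}$, and local acyclicity ensures the local triangulations glue). Your proposal reconstructs a full proof along exactly these lines, so there is no divergence of approach to report; the bulk of your argument (staircase construction from a total order on each face, compatibility of staircases under restriction to subfaces, extraction of the orientation from the square diagonals, and the Schönhardt-type obstruction ruling out directed $3$-cycles) is sound.

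There is, however, one incorrect claim in your uniqueness step. You assert that the unique simplex of $\Sigma$ in $F\times[-1,1]$ containing $(v_0,-1)$ and $(v_k,1)$ must be $\{(v_0,-1),\dots,(v_k,-1),(v_k,1)\}$. That is false: \emph{every} staircase simplex $\{(v_0,-1),\dots,(v_\ell,-1),(v_\ell,1),\dots,(v_k,1)\}$ contains both $(v_0,-1)$ and $(v_k,1)$, so this pair does not single out a simplex. The pivot you actually want is the unique top-dimensional simplex containing the whole bottom face $F\times\{-1\}$. Any such simplex is $F\times\{-1\}\cup\{(v,1)\}$ for a single top vertex $(v,1)$, and $(v,1)$ must be joined by an edge to every $(v_j,-1)$; under the prescribed diagonals the only top vertex adjacent to all bottom vertices is $(v_k,1)$ (for $j<k$ the diagonal of the square over $\{v_j,v_k\}$ is $\{(v_j,-1),(v_k,1)\}$, not $\{(v_j,1),(v_k,-1)\}$). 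So the unique top simplex over $F\times\{-1\}$ is $\{(v_0,-1),\dots,(v_k,-1),(v_k,1)\}$. Peeling it off and inducting as you describe then works. With that repair the proposal is a correct proof of the lemma.
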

	
	Every simplicial complex has a locally acyclic orientation, obtained for example from a (globally) acyclic orientations of its graph. The bijection in Lemma \ref{triangulation of prism} is easy to describe: if $\{i,j\}$ is an edge of $\Delta$ with $i\rightarrow j$, then $\{(i,1),(j,-1)\}$ is an edge of $\Delta\times[-1,1]$ and vice versa. This induces a triangulation of $F\times [-1,1]$ for every face $F\in \Delta$. Furthermore, the locally acyclicity guarantees that the union of triangulations of individual cells can be coherently completed to a triangulation of $\Delta\times [-1,1]$. 
	
	Given an l.a.o.\ $\ell$ of $\Delta$, we denote by $(\Delta\times [-1,1])^{\ell}$ the refinement of $\Delta\times [-1,1]$ induced by $\ell$. The following lemma shows that if $\Delta$ is a PL sphere, then $(\Delta\times [-1,1])^{\ell}$ is also in the PL category.
	\begin{lemma}\label{lem: is PL manifold}
		Let $\Delta$ be a PL $(d-1)$-sphere and let $\ell$ be any locally acyclic orientation on $\Delta$. Then, the refinement $(\Delta\times [-1, 1])^{\ell}$ is a PL $d$-manifold with boundary and without interior vertices.
	\end{lemma}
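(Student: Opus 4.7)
The proof naturally splits into two claims: that $(\Delta \times [-1,1])^\ell$ has no interior vertices, and that it is a PL manifold with boundary. The no-interior-vertex claim is essentially immediate from the construction, since the vertex set is $V(\Delta) \times \{-1, 1\}$ and every such vertex lies in $\partial(|\Delta| \times [-1,1])$. Well-definedness of the construction relies on the observation that every directed cycle in a tournament contains a directed $3$-cycle, so the local acyclicity of $\ell$ forces its restriction to each face $F \in \Delta$ to be globally acyclic, i.e., a linear order on $V(F)$. This linear order determines the standard staircase triangulation of the prism $F \times [-1,1]$ into $\dim F + 1$ top-dimensional simplices, and since restrictions of linear orders to subsets are linear orders, these local staircases glue consistently on shared subfaces.

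The key local building block I would establish next is that for every face $F \in \Delta$ of dimension $k$, the staircase refinement $(F \times [-1,1])^\ell$ is a PL $(k+1)$-ball. This can be proved by induction on $k$: enumerate the $k+1$ top-dimensional simplices in the order of their switching positions $p = 0, 1, \dots, k$. Each newly added simplex meets the previous union in exactly one codimension-$1$ face, which is the staircase prism over a codimension-$1$ subface of $F$ and hence a PL $k$-ball by the inductive hypothesis. Gluing at each stage preserves the PL-ball property by \Cref{lem: property of PL manifolds}(2).

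With this local fact in hand, I would prove the main statement by induction on $d$. The base $d=1$ is immediate: each component of the prism is a line segment. For the inductive step, I must verify that every non-empty face of $\Sigma := (\Delta \times [-1,1])^\ell$ has the appropriate link, namely a PL sphere if the face is interior and a PL ball if it lies on the boundary. Attention focuses on vertex links. For $(v, 1) \in V(\Sigma)$, the restriction of $\ell$ to $\lk_\Delta(v)$ is an l.a.o.\ of a PL $(d-2)$-sphere, so by the inductive hypothesis $(\lk_\Delta(v) \times [-1,1])^{\ell|_{\lk(v)}}$ is a PL $(d-1)$-manifold with boundary. A direct unpacking of the staircase construction identifies $\lk_\Sigma(v, 1)$ as an "upper half" of this prism, glued to $\lk_\Delta(v) \times \{1\}$ along the subcomplex spanned by the out-neighbors of $v$; \Cref{lem: property of PL manifolds}(2) then assembles these into a PL $(d-1)$-ball. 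The case $(v, -1)$ is symmetric.

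The main technical obstacle is carrying out this vertex-link decomposition cleanly: one must identify exactly which staircase top-dimensional simplices of $\Sigma$ contain $(v, 1)$, describe their common intersection with $\lk_\Delta(v) \times \{1\}$, and verify inductively that this intersection is itself a PL ball of the correct codimension. Links of interior edges $\{(u, -1), (v, 1)\}$ (which arise precisely when $u \to v$ in $\ell$) and of higher-dimensional faces are then handled using the identity $\lk_\Sigma(\sigma \cup \tau) = \lk_{\lk_\Sigma(\sigma)}(\tau)$, which reduces them to links in lower-dimensional staircase refinements that fall under the inductive hypothesis, with the resulting links being PL spheres for interior faces and PL balls for boundary ones.
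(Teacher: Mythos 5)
Your strategy---verify PL-manifoldness by computing links directly and inducting on dimension---is genuinely different from the paper's proof, which avoids link computations altogether: there one subdivides $\Delta$ to a complex $\Delta'$ whose simplices are linearly embedded in $\R^N$, extends $\ell$ to an l.a.o.\ of $\Delta'$ by placing the new vertices above the old ones, and observes that the resulting staircase refinement of $\Delta'\times[-1,1]$ is a linear triangulation of the polyhedron $|\Delta'|\times[-1,1]$ refining $\Delta\times[-1,1]$, so that $(\Delta\times[-1,1])^{\ell}$ is a PL manifold essentially for free. Your route is more self-contained in spirit, but as written it has a genuine gap precisely at the step you yourself flag as the main obstacle.

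Concretely, the claimed identification of $\lk_\Sigma((v,1))$ is not correct. Write $L=\lk_\Delta(v)$ and, for a facet $G$ of $L$, split $G=G_<\uplus G_>$ into in- and out-neighbors of $v$. The staircase simplices of $(G\cup\{v\})\times[-1,1]$ containing $(v,1)$ are those whose switching position is at or after $v$, and their links at $(v,1)$ are $(G_<\times\{1\})*\tau$, where $\tau$ ranges over the staircase simplices of $G_>\times[-1,1]$ \emph{together with} the extra simplex $(v,-1)*(G_>\times\{-1\})$. Hence $\lk_\Sigma((v,1))$ is the union of (a) the subcomplex of $(L\times[-1,1])^{\ell|_L}$ formed by the top-dimensional staircase simplices in which every in-neighbor of $v$ sits at level $+1$ and none at level $-1$ (an ``upper half''), and (b) the cone with apex $(v,-1)$ over the middle slice $\bigcup_G (G_<\times\{1\})*(G_>\times\{-1\})$. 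The vertex $(v,-1)$ and piece (b) are entirely absent from your description; $L\times\{1\}$ is already contained in (a), so there is nothing to glue it to along the out-neighbors; and neither (a) nor the middle slice is covered by your inductive hypothesis, which only asserts that the full prism over $L$ is a PL manifold---each would need its own proof that it is a PL ball of the right dimension before \Cref{lem: property of PL manifolds} can be invoked. (A smaller slip: in your shelling of $(F\times[-1,1])^\ell$, the intersection of the new simplex with the previous union is the single codimension-one simplex $\sigma_p\cap\sigma_{p+1}$, not a staircase prism over a codimension-one subface of $F$; this makes that step easier, not harder.) Until the link is decomposed correctly and each piece is shown to be a PL ball, the argument is incomplete; the paper's linearization trick is the shortcut that makes all of this unnecessary.
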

	
	\begin{proof}
		Since $\Delta$ is a PL $(d-1)$-sphere it can be subdivided to a complex $\Delta'$ all whose simplices can be linearly embedded in $\R^N$ for some $N$. Moreover, for every locally acyclic orientation $\ell$ of $\Delta$, consider a linear ordering of the vertices of $\Delta'$ such that $v>w$ for every $v\in V(\Delta')\setminus V(\Delta)$ and $w\in V(\Delta)$. Orient all the edges $\{u,v\}\notin \Delta$ with $u\to v$ if $u>v$. This extends to a locally acyclic orientation $\ell'$ of $\Delta'$ that agrees with $\ell$ when restricted to the edges of $\Delta$. The corresponding refinement of $\Delta'\times[-1,1]$ is a subdivision of $\Delta\times[-1,1]$ in which every simplex is linearly embedded in $\R^{N+1}$. Therefore, $(\Delta\times [-1,1])^{\ell}$ is a PL manifold with boundary. Furthermore $V((\Delta\times [-1,1])^\ell)=V(\Delta\times \{-1\})\cup V(\Delta\times \{1\})=V(\partial (\Delta\times [-1,1])^\ell)$ and hence $\Delta^\ell$ has no interior vertices.
	\end{proof}
	
	In what follows, we show that if $\Delta$ is cs, then there exists an l.a.o.\ $\ell$ such that $(\Delta\times [-1,1])^\ell$ is also cs. 
	\begin{lemma}\label{lem: symmetry l.a.o.}
		Let $\Delta$ be a cs simplicial complex with free involution $\sigma$ and consider a refinement of $\Delta\times[-1,1]$ which is cs with free involution induced by $\widetilde{\sigma}:(v,-1) \mapsto (\sigma(v),1), \; (v, 1)\mapsto (\sigma(v), -1)$ for any vertex $v\in \Delta$. Then the corresponding locally acyclic orientation of $\Delta$ is order reversing w.r.t. the symmetry, i.e., $v\rightarrow w$ if and only if $\sigma(w) \rightarrow \sigma(v)$.
	\end{lemma}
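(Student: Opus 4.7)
The plan is to exploit directly the explicit bijection underlying \Cref{triangulation of prism}. Recall the correspondence: an edge $\{v,w\}$ of $\Delta$ oriented as $v\to w$ under $\ell$ gives rise, in $(\Delta\times[-1,1])^{\ell}$, to the diagonal edge $\{(v,1),(w,-1)\}$; conversely, every diagonal edge of the refinement arises this way. In particular, knowing the $1$-skeleton of the refinement in the ``upper right to lower left'' pattern is equivalent to knowing $\ell$.

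With that in hand, the argument is essentially a direct check. Suppose $v\to w$ in $\ell$, so that $\{(v,1),(w,-1)\}$ is an edge of the refinement. Since the refinement is cs with free involution induced by $\widetilde{\sigma}$, the image
\[
\widetilde{\sigma}\bigl(\{(v,1),(w,-1)\}\bigr)=\{(\sigma(v),-1),(\sigma(w),1)\}
\]
must also be an edge of the refinement. But this is a diagonal edge with first coordinate at level $1$ equal to $\sigma(w)$ and first coordinate at level $-1$ equal to $\sigma(v)$, so the bijection of \Cref{triangulation of prism} forces the orientation $\sigma(w)\to\sigma(v)$ on the edge $\{\sigma(v),\sigma(w)\}$ of $\Delta$. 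The converse direction is identical, replacing $(v,w)$ with $(\sigma(w),\sigma(v))$ and using that $\sigma$ is an involution.

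The only point worth being careful about is that the diagonal edges are indeed the edges of the refinement that encode the orientation, i.e.\ that vertical edges $\{(v,1),(v,-1)\}$ and horizontal edges $\{(v,\pm 1),(w,\pm 1)\}$ do not interfere with the correspondence; this is immediate from the description in \Cref{triangulation of prism}, since both horizontal and vertical edges are fixed (as unordered pairs) by the interchange of levels induced by $\widetilde{\sigma}$ and so carry no orientation data. Thus there is no real obstacle: once the bijection of \Cref{triangulation of prism} is recorded, the order-reversing property of $\ell$ is simply the transcription of the symmetry $\widetilde{\sigma}$ through that bijection.
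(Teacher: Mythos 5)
Your proof is correct and is essentially the paper's own argument: you translate the cs-invariance of the refined prism through the explicit edge bijection of \Cref{triangulation of prism} (diagonal edge $\{(v,1),(w,-1)\}$ $\leftrightarrow$ orientation $v\to w$), exactly as the paper does. The only small quibble is that horizontal and vertical edges are not literally fixed by $\widetilde{\sigma}$ as unordered pairs (they are permuted within their classes, e.g.\ $\{(v,1),(w,1)\}\mapsto\{(\sigma(v),-1),(\sigma(w),-1)\}$), but your point that they carry no orientation data and do not interfere with the correspondence is still valid.
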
	 
	\begin{proof}
		In any cs triangulation of $\Delta\times[-1,1]$ the set $\{(v,-1),(w,1)\}$ is an edge if and only if $\{\widetilde{\sigma}((v,-1)),\widetilde{\sigma}((w,1))\}=\{(\sigma(v),1),(\sigma(w),-1)\}$ is an edge, which implies that on the corresponding l.a.o.\ we have that $v\rightarrow w$ if and only if $\sigma(w)\rightarrow \sigma(v)$.
	\end{proof}
	
	\begin{lemma}\label{lem: l.a.o. for S}
		Let $S$ be a cs $PL$ $(d-1)$-sphere that satisfies property $\mathrm{P}_{d-1}$ w.r.t. $(B, D, v)$. Let $W:=V(D)\uplus V(\st_{S_{d-2}}(v_{d-2}))$ and let
		$\mathbf{A}:=\{\{u,w\}\in S:\text{ } u\in W\text{ and }w\in \sigma(W)\}.$
	There exists an l.a.o.\ of $S$ such that:
		\begin{itemize}
			\item $u\rightarrow w$ for every edge $\{u,w\}$ in $\mathbf{A}$. 
			\item For every edge $\{u,w\}\in S$, $u\rightarrow w$ if and only if $\sigma(w)\rightarrow \sigma(u)$.
		\end{itemize}  
	\end{lemma}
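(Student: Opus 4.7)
The plan is to build the orientation by exploiting a natural bipartition of $V(S)$ into $W$ and $\sigma(W)$, so that both bullets in the statement become essentially tautological by design. I would first verify that $V(S) = W \uplus \sigma(W)$. Condition ii of $\mathrm{P}_{d-1}$ applied to $S$ gives $V(S) = V(D)\uplus V(\sigma(D))\uplus V(S_{d-2})$, while condition iii of $\mathrm{P}_{d-2}$ applied to $S_{d-2}$ gives $V(S_{d-2}) = V(\st_{S_{d-2}}(v_{d-2}))\cup V(\st_{S_{d-2}}(\sigma(v_{d-2})))$. Since $S_{d-2}$ has no induced cs $4$-cycle (a fact we fold into the running induction on Property $\mathrm{P}$), the two stars share only the empty face; in particular their vertex sets are disjoint. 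Together with the disjointness of $V(D)$ and $V(\sigma(D))$ from $V(S_{d-2})$, this yields the disjoint partition $V(S) = W\uplus\sigma(W)$.

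Next, I would fix any linear order $<$ on $W$ and orient the edges of $S$ by the following three rules: (a) for $\{u,w\}\subseteq W$, set $u\to w$ whenever $u<w$; (b) for $\{u,w\}\subseteq \sigma(W)$, set $u\to w$ whenever $\sigma(w)<\sigma(u)$; (c) for $\{u,w\}\in \mathbf{A}$ with $u\in W$, $w\in\sigma(W)$, set $u\to w$. The first requirement of the lemma is immediate from (c). The antisymmetry $u\to w \iff \sigma(w)\to \sigma(u)$ is built into each case: rules (a) and (b) are defined to mirror each other under $\sigma$, and rule (c) is invariant under the swap $(u,w)\leftrightarrow(\sigma(w),\sigma(u))$ because $\sigma$ exchanges $W$ and $\sigma(W)$.

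It remains to check local acyclicity. Given a $2$-face $\{x,y,z\}$ of $S$, stratify by $k = |\{x,y,z\}\cap W|$. When $k\in\{0,3\}$ the three edges of the triangle inherit a total order from $<$ (or its reverse via $\sigma$), so no oriented $3$-cycle appears. When $k=1$, the unique vertex of the triangle in $W$ is the tail of both cross edges in $\mathbf{A}$ and hence a source; symmetrically, when $k=2$ the unique $\sigma(W)$-vertex is a sink. In all four cases an oriented $3$-cycle is impossible, so the orientation is locally acyclic. The main obstacle I anticipate is the bookkeeping behind the disjoint partition $V(S)=W\uplus\sigma(W)$, which hinges on the no-induced-cs-$4$-cycle property of $S_{d-2}$ provided by the inductive step; once that is settled, the construction and its verification are routine.
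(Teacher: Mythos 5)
Your proof is correct and follows essentially the same strategy as the paper's: partition $V(S)$ into $W$ and $\sigma(W)$, fix an acyclic orientation on the induced subcomplex on $W$ (the paper allows any l.a.o., you take the special case coming from a linear order), mirror it on $\sigma(W)$, orient all $\mathbf A$-edges from $W$ to $\sigma(W)$, and verify local acyclicity by observing that any triangle meeting both sides has a source in $W$ or a sink in $\sigma(W)$. Your explicit remark that the disjointness $V(S)=W\uplus\sigma(W)$ rests on $S_{d-2}$ having no induced cs $4$-cycle (so that $\st_{S_{d-2}}(v_{d-2})$ and $\st_{S_{d-2}}(\sigma(v_{d-2}))$ have disjoint vertex sets) is a point the paper glosses over, attributing it to Property $\mathrm{P}$ alone; you are right that it actually needs the no-$4$-cycle hypothesis carried along the induction.
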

	\begin{proof}
		First we choose any l.a.o.\ of the induced subcomplex of $S$ on $W$. Following \Cref{lem: symmetry l.a.o.} we impose on the induced subcomplex on $\sigma(W)$ a reverse orientation. Since $S$ satisfies Property $\mathrm{P}_{d-1}$, $V(S)=V(W)\uplus V(\sigma(W))$ and hence the edges of $S$ with a vertex in $V(W)$ and one in  $V(\sigma(W))$ belong to $\mathbf{A}$. We orient all edges $\{u,w\}$ in $\mathbf{A}$ as $u\rightarrow w$. This orientation is by definition acyclic on every $2$-simplex not containing edges in $\mathbf{A}$. It suffices to check that no edge in $\mathbf{A}$ is contained in an oriented cycle. Since every $2$-simplex containing an edge in $\mathbf{A}$ also contains another edge in $\mathbf{A}$, it contains a vertex $u$ or a vertex $w$ such that either $w_1\leftarrow u\rightarrow w_2$ or $u_1\rightarrow w\leftarrow u_2$. This proves the claim.
	\end{proof}
	
It follows from Lemmas \ref{lem: is PL manifold} and \ref{lem: l.a.o. for S} that $(S\times [-1,1])^\ell$ is cs PL triangulation of $\Sp^{d-1} \times [-1,1]$ for some l.a.o.\ $\ell$ of $S$. It is left to prove that this triangulation refines $\Sigma''$. 
	
	\begin{figure}[h]
		\centering
		\includegraphics[scale=0.8]{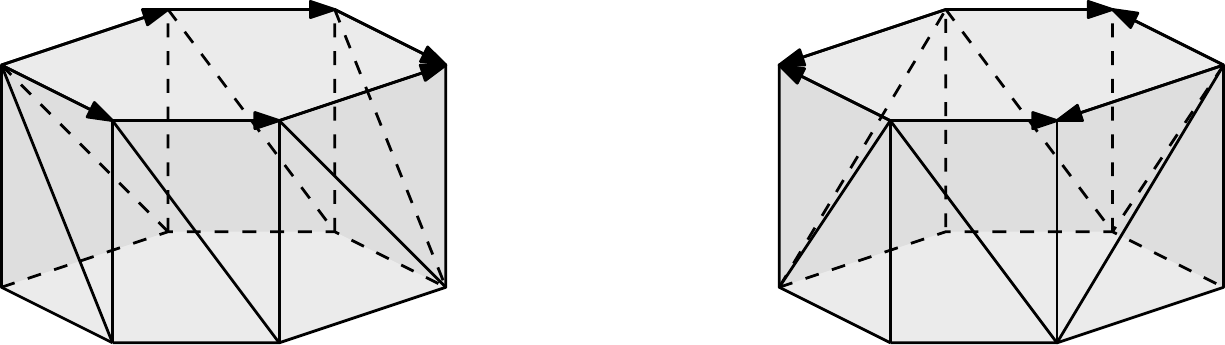}
		\caption{Two different locally acyclic orientations of the cs 6-cycle $S_1$ that satisfy \Cref{lem: l.a.o. for S} and induced triangulations of $S_1\times [-1, 1]$.}
		\label{fig: l.a.o.}
	\end{figure}

	\begin{corollary}\label{cor: exists triangulation prism}
		Let $S$ be a cs $(d-1)$-sphere that satisfies property $\mathrm{P}_{d-1}$ w.r.t. $(B, D, v)$. There exists an l.a.o.\ $\ell$ on $S$ such that 
		\begin{itemize}
			\item[i.] $(S\times[-1,1])^\ell$ is a cs PL manifold with boundary that refines $\Sigma''$;
			\item[ii.] $(S\times [-1,1])^\ell$ contains an induced cs subcomplex $\Gamma$ isomorphic to $S$ which contains $(D\times\{1\})\cup (\sigma(D)\times\{-1\})$;
			\item[iii.] $V((S\times [-1,1])^\ell)=V(\Sigma'')$.
		\end{itemize} 
	\end{corollary}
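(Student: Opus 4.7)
The plan is to assemble the corollary by combining Lemmas \ref{lem: l.a.o. for S}, \ref{triangulation of prism}, \ref{lem: is PL manifold}, and \ref{lem: symmetry l.a.o.}. First I would fix an l.a.o.\ $\ell$ on $S$ satisfying the two conditions of \Cref{lem: l.a.o. for S}, and let $(S\times[-1,1])^\ell$ be the refinement of $S\times[-1,1]$ determined by $\ell$ via \Cref{triangulation of prism}. \Cref{lem: is PL manifold} immediately yields that $(S\times[-1,1])^\ell$ is a PL $d$-manifold with boundary having no interior vertices; in particular $V((S\times[-1,1])^\ell)=V(S\times\{-1\})\cup V(S\times\{1\})=V(\Sigma'')$, which is claim (iii). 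The argument of \Cref{lem: symmetry l.a.o.} is written as an ``if and only if'' in its proof, so the $\sigma$-reversing condition built into $\ell$ forces the map $\widetilde{\sigma}:(v,\varepsilon)\mapsto(\sigma(v),-\varepsilon)$ to extend to a free simplicial involution of $(S\times[-1,1])^\ell$, making the refinement centrally symmetric.

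The main step is verifying that $(S\times[-1,1])^\ell$ refines $\Sigma''$ in part (i). Recall that $W=V(D)\cup V(\st_{S_{d-2}}(v_{d-2}))$, so that $V(S)=W\uplus \sigma(W)$, and for every face $F\in S$ write $F=F_W\uplus F_{\sigma(W)}$ with $F_W\subseteq W$ and $F_{\sigma(W)}\subseteq\sigma(W)$. Unwinding the identifications of \Cref{Sigma'}, each of the two cells $F\times[-1,0]$ and $F\times[0,1]$ of $\Sigma'$ descends in $\Sigma''$ to a cell whose vertices split across heights $-1$ and $+1$ according to membership in $W$ versus $\sigma(W)$, and whose shared boundary is the set $\psi(F\times\{0\})=F_W\times\{1\}\cup F_{\sigma(W)}\times\{-1\}$. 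The $\mathbf{A}$-orientation condition of \Cref{lem: l.a.o. for S} guarantees that in the linear order induced by $\ell$ on $F$, every vertex of $F_W$ precedes every vertex of $F_{\sigma(W)}$, so the staircase simplices of $F\times[-1,1]$ prescribed by $\ell$ group naturally into those lying below $\psi(F\times\{0\})$ and those lying above, matching exactly the two collapsed cells of $\Sigma''$.

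Finally, for (ii), I would define $\Gamma$ to be the set of simplices $\{\psi(F\times\{0\}):F\in S\}$ inside $(S\times[-1,1])^\ell$. The previous paragraph ensures that every such $\psi(F\times\{0\})$ is indeed a simplex of the triangulation, and the assignment $F\mapsto\psi(F\times\{0\})$ is manifestly an isomorphism onto its image, so $\Gamma\cong S$. Since $V(D)\subseteq W$ and $V(\sigma(D))\subseteq\sigma(W)$, both $D\times\{1\}$ and $\sigma(D)\times\{-1\}$ are automatically subcomplexes of $\Gamma$, and central symmetry of $\Gamma$ follows from $\widetilde\sigma$ swapping $W\times\{1\}$ with $\sigma(W)\times\{-1\}$. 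The induced property requires only a brief case check: every simplex of $(S\times[-1,1])^\ell$ lies in some staircase simplex $T_j$ of some face $F\times[-1,1]$, and examining $V(T_j)\cap V(\Gamma)$ relative to the split index $|F_W|$ shows this intersection is always $\psi(G\times\{0\})$ for a subface $G\subseteq F$, hence already in $\Gamma$. The main obstacle is the cell-matching step of the previous paragraph; the $\mathbf{A}$-orientation hypothesis is essential there, as without it the staircase triangulation would not descend coherently through the quotient defining $\Sigma''$.
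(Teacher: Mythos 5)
Your proposal is correct and follows essentially the same route as the paper: choose an l.a.o.\ via \Cref{lem: l.a.o. for S}, apply \Cref{lem: is PL manifold} and (the converse direction of) \Cref{lem: symmetry l.a.o.} for the PL/cs/no-interior-vertices assertions, and observe that the $\mathbf{A}$-edges produce exactly the diagonals needed for $\Sigma''$. The paper's own proof is terser — it just notes that the $\mathbf{A}$-edges coincide with the new edges of $\Sigma''$ and then cites \Cref{lem: property of Sigma''} — whereas you unwind the staircase triangulation of each cell $F\times[-1,1]$ explicitly; that extra detail is accurate and fills in what the paper leaves implicit.
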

	\begin{proof}
		Choose any l.a.o.\ $\ell$ on $S$ that satisfies the conditions in Lemma \ref{lem: l.a.o. for S}. The orientation on any edge $\{u,w\}\in \mathbf{A}$ (as defined in Lemma \ref{lem: l.a.o. for S}) generates a new edge $\{(u,1), (w,-1)\}\in \Sigma$. This coincides with those edges in $\Sigma''$ but not in $S\times [-1,1]$. Hence $(S\times [-1,1])^\ell$ is a refinement of $\Sigma''$. By Lemma \ref{lem: is PL manifold}, $(S\times [-1,1])^\ell$ is a PL manifold. Part ii and iii follow from Lemmas \ref{lem: property of Sigma''} and \ref{lem: is PL manifold}.
	\end{proof}
	\begin{remark}As we see from \Cref{fig: l.a.o.}, the l.a.o.\ of $S$  that satisfies the conditions in \Cref{lem: l.a.o. for S} is not unique. In what follows, we denote by $\Sigma$ \emph{any} PL manifold $(S\times [-1, 1])^\ell$ obtained in \Cref{cor: exists triangulation prism}. Although $\Sigma$ can be defined directly from the l.a.o., Corollary \ref{cor: exists triangulation prism} ii (which follows from subsection 3.1) will play a key role in our inductive construction in Section 4.
	\end{remark}
	\subsection{From $\Sigma$ to a PL triangulation of $\Sp^d$}
	In this subsection we complete the cs triangulation of the prism over a PL $(d-1)$-sphere as in \Cref{cor: exists triangulation prism} to a PL $d$-sphere. We introduce two pairs of antipodal vertices $(v_{+},v_{-}),(w_{+},w_{-})$ and define
	\begin{equation}\label{eq: phi'}
	\Phi':= \Sigma \cup (v_{+} * K_+) \cup (v_{-} * K_-)\cup (w_{+} * L_+)\cup (w_{-} * L_-),
	\end{equation} 
	where 
	$K_{+}:=B\times\{1\}$, $K_{-}:=\sigma(B)\times\{-1\}$,  $L_{+}:=(\sigma(B)\times\{ 1\})\cup_{\partial B \times\{1\}} (v_{+}*\partial B\times\{1\})$, and $L_{-}:=(B\times\{ -1\})\cup_{\partial B \times\{-1\}} (v_{+}*\partial B\times\{-1\})$. The free involution $\sigma$ on $\Sigma$ extends to an involution on $\Phi'$ by additionally letting $\sigma(v_+)=v_-$ and $\sigma(w_+)=w_-$.
	The second complex in \Cref{fig: rp2} offers a visualization of $\Phi'$ in the $2$-dimensional case.
	\begin{proposition}
		The simplicial complex $\Phi'$ in \eqref{eq: phi'} is a cs PL triangulation of $\Sp^d$.
	\end{proposition}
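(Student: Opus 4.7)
The plan is to exhibit $\Phi'$ as a closed PL $d$-manifold that is PL homeomorphic to $\Sp^d$, with the natural extension of $\sigma$ providing a free involution. Since $\Phi'$ decomposes as the prism $\Sigma$ with its two boundary spheres $S \times \{\pm 1\}$ capped by the subcomplexes $C_\pm := (v_\pm \ast K_\pm) \cup (w_\pm \ast L_\pm)$, the goal is to verify each cap is a PL $d$-ball with the expected boundary, glue everything, and then identify the resulting closed PL manifold with a sphere.

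I would first show that $L_+$ is a PL $(d-1)$-sphere. It is the union of the PL $(d-1)$-ball $\sigma(B) \times \{1\}$ with the PL $(d-1)$-ball $v_+ \ast (\partial B \times \{1\})$ (a cone over the PL $(d-2)$-sphere $\partial B$), glued along their common boundary $\partial B \times \{1\}$; \Cref{lem: property of PL manifolds}(2) therefore makes $L_+$ a PL $(d-1)$-sphere, whence $w_+ \ast L_+$ is a PL $d$-ball as a cone over a PL sphere. Similarly $v_+ \ast K_+$ is a PL $d$-ball by \Cref{lem: property of PL manifolds}(1), and its intersection with $w_+ \ast L_+$ computes to the PL $(d-1)$-ball $v_+ \ast (\partial B \times \{1\})$, which lies in both boundaries. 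A further application of \Cref{lem: property of PL manifolds}(2) yields that $C_+$ is a PL $d$-ball, and bookkeeping of the boundaries gives $\partial C_+ = (B \cup \sigma(B)) \times \{1\} = S \times \{1\}$. The analogous conclusion for $C_-$ follows by symmetry (interpreting the $v_+$ appearing in the definition of $L_-$ as a typo for $v_-$).

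Next, by \Cref{cor: exists triangulation prism}, $\Sigma$ is a cs PL $d$-manifold with $\partial \Sigma = (S \times \{1\}) \uplus (S \times \{-1\})$. Two applications of \Cref{lem: property of PL manifolds}(2) — gluing $C_+$ and $C_-$ to $\Sigma$ along the respective boundary spheres, each intersection being a PL $(d-1)$-sphere lying in both boundaries — show that $\Phi'$ is a closed PL $d$-manifold. The involution $\sigma$ on $\Sigma$ extends to $\Phi'$ by declaring $\sigma(v_+) := v_-$ and $\sigma(w_+) := w_-$, and the symmetric construction of the caps forces this extension to be a simplicial free involution.

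It remains to identify $\Phi'$ with $\Sp^d$. The plan is to show that $\Sigma \cup C_+$ is a PL $d$-ball. Since $\Sigma \pl \Sp^{d-1} \times [-1,1]$ (as $\Sigma$ is a PL refinement of $S \times [-1, 1]$ and $S$ is a PL $(d-1)$-sphere), and since attaching a PL $d$-ball along one boundary of a PL cylinder over a PL sphere is PL homeomorphic to a cone over that sphere, $\Sigma \cup C_+$ is a PL $d$-ball with boundary $S \times \{-1\}$. Then $\Phi' = (\Sigma \cup C_+) \cup C_-$ is the union of two PL $d$-balls glued along the PL $(d-1)$-sphere $S \times \{-1\}$, which is a PL $d$-sphere by Alexander's theorem. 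The main obstacle is this last step: a generic PL $d$-manifold with PL sphere boundary need not be a PL ball, but here the explicit cylinder structure of $\Sigma$ (arising from the prism construction) enables the required PL identification with a cone.
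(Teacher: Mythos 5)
Your proof is correct and follows essentially the same decomposition strategy as the paper's (much terser) argument: write $\Phi'$ as the prism $\Sigma$ capped off by $C_\pm := (v_\pm * K_\pm)\cup (w_\pm * L_\pm)$, identify each piece's PL type, and glue via \Cref{lem: property of PL manifolds}. Where you go further is in the amount of verification (checking that the intersections $\Delta_1\cap\Delta_2$ really equal $\partial\Delta_1\cap\partial\Delta_2$, computing $\partial C_+=S\times\{1\}$, etc.), and — more substantively — in the last step. The paper establishes only that $\Phi'$ is a PL manifold whose geometric realization is homeomorphic to $\Sp^d$, which literally matches the wording of the proposition (given the paper's definition of ``PL triangulation''), and leaves ``$|\Phi'|\cong\Sp^d$'' as an unargued topological observation. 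You instead prove the stronger claim that $\Phi'$ is a \emph{PL $d$-sphere}, i.e.\ PL homeomorphic to $\partial\Delta^{d+1}$. Your route — collar absorption (a cylinder over $\partial C_+$ glued to $C_+$ along one end is a PL ball) followed by Alexander's theorem for two balls glued along their common boundary sphere — relies on one standard PL fact (collar absorption) that is not explicitly among the paper's stated Lemmas 2.3 and 2.4. In return, the stronger conclusion is what is actually needed later: Proposition 3.12 applies Nevo's edge-contraction theorem to deduce that the contracted complex $\Phi$ is PL homeomorphic to $\Phi'$, and then asserts $\Phi$ is a PL $d$-sphere, a step that is only justified once one knows $\Phi'\pl \Sp^d$ rather than merely $|\Phi'|\cong\Sp^d$. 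You also correctly read the formula for $L_-$ with $v_+$ replaced by $v_-$, which is evidently a typo in the paper.
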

	\begin{proof}
		By \Cref{cor: exists triangulation prism}, $\Sigma$ is a PL manifold with boundary. By Property $\mathrm{P}_{d-1}$, $K_{\pm}$ is a PL $(d-1)$-ball and by \Cref{lem: property of PL manifolds}, $L_{\pm}$ is a PL $(d-1)$-sphere. Again by \Cref{lem: property of PL manifolds}, $\Phi'$ is a PL manifold. Finally it is clear that $\Phi'$ is centrally symmetric and $|\Phi'|$ is homeomorphic to $\Sp^d$. 
	\end{proof}
	
	We next contract certain edges of $\Phi'$ in order to reduce the number of vertices. A simplicial complex is obtained from $\Delta$ via an \emph{edge contraction} of $\{i,j\}\in\Delta$ if it is the image of $\Delta$ w.r.t. the simplicial map that identifies $i$ with $j$. The edges we will contract are those of the form $\{(u,1),(u,-1)\}$, where $u\in V(D)\cup V(\sigma(D))$. In other words, we identify $D\times\{1\}$ and $\sigma(D)\times\{-1\}$ with $D\times\{-1\}$ and $\sigma(D)\times\{1\}$. It is easy to see that in this case the procedure does not depend on the order in which contractions are applied. To prove that the resulting complex is still a PL sphere we use a result of Nevo \cite{Nevo07}.
	\begin{theorem}[{\cite[Theorem 1.4]{Nevo07}}]\label{lem eran}
		Let $\Delta$ be a PL manifold and let $\Delta'$ be the contraction of $\Delta$ at the edge $\{i,j\}$. Then $\Delta'$ is PL homeomorphic to $\Delta$ if and only if $\lk_{\Delta}(i)\cap\lk_{\Delta}(j)=\lk_{\Delta}(\{i,j\})$.
	\end{theorem}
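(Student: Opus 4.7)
The plan is to prove the two implications separately, since the forward direction is an obstruction argument and the reverse direction requires constructing an explicit PL homeomorphism.

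For the necessity direction, I would argue contrapositively: suppose there exists a face $F \in (\lk_\Delta(i) \cap \lk_\Delta(j)) \setminus \lk_\Delta(\{i,j\})$. Then $F \cup \{i\}$ and $F \cup \{j\}$ are both faces of $\Delta$, while $F \cup \{i,j\}$ is not. Under the contraction map $\pi \colon \Delta \to \Delta'$, the two distinct faces $F \cup \{i\}$ and $F \cup \{j\}$ are identified to a single face $F \cup \{i'\}$, where $i' = \pi(i) = \pi(j)$. I would then inspect the link of $F \cup \{i'\}$ in $\Delta'$: its vertex set is the intersection $\lk_\Delta(F \cup \{i\}) \cap \lk_\Delta(F \cup \{j\})$, which has strictly smaller size than either link individually. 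By comparing, for instance via the Euler characteristic, this link with what is forced by the PL manifold condition on $\Delta$ (namely that both $\lk_\Delta(F \cup \{i\})$ and $\lk_\Delta(F \cup \{j\})$ are PL $(d-|F|-2)$-spheres or balls), one derives a contradiction with the requirement that $\Delta'$ be PL homeomorphic to $\Delta$.

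For the sufficiency direction, the idea is to realize the contraction as a local move. Let $U := \st_\Delta(i) \cup \st_\Delta(j)$. Outside $U$ nothing changes under the contraction, so the core of the proof is to show that $U$ and its image $U' := \st_{\Delta'}(i')$ are PL homeomorphic via a map that is the identity on $\partial U = \partial U'$. I would first use \Cref{lem: property of PL manifolds} and Newman's theorem (\Cref{lem: newmann}) together with the link condition to verify that $U$ is a PL ball: writing $U = (i * \lk_\Delta(i)) \cup (j * \lk_\Delta(j))$, the intersection of the two cones is exactly $\{i,j\} * \lk_\Delta(\{i,j\})$ by the hypothesis, which gives a clean bipyramid structure. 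The same reasoning shows $U'$ is a PL ball. Then one produces an explicit PL homeomorphism $U \to U'$ which is the identity on the boundary, for instance by linearly collapsing the bipyramid $\{i,j\} * \lk_\Delta(\{i,j\})$ onto the cone $i' * \lk_\Delta(\{i,j\})$ and leaving the remainder of $U$ essentially untouched. Gluing this with the identity on $\overline{\Delta \setminus U}$ yields the required PL homeomorphism $\Delta \pl \Delta'$.

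The main obstacle is the sufficiency direction, specifically verifying that the local PL homeomorphism on $U$ glues correctly with the identity on $\overline{\Delta \setminus U}$. The link condition is exactly what ensures that no face of $\Delta$ outside the star of $\{i,j\}$ gets inadvertently identified, so that the decomposition $\Delta = U \cup \overline{\Delta \setminus U}$ is carried to the analogous decomposition of $\Delta'$ along matching boundaries. Once this gluing is in place, the remaining work is a routine application of the PL manifold gluing lemma in \Cref{lem: property of PL manifolds}(2).
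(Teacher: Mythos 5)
This statement is quoted verbatim from Nevo's paper and the present article gives no proof of it, so there is no argument of the authors to compare yours against; I can only assess your sketch on its own terms, and it has two concrete problems.

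In the necessity direction your description of $\lk_{\Delta'}(F\cup\{i'\})$ is wrong. A face $G$ disjoint from $F\cup\{i,j\}$ lies in $\lk_{\Delta'}(F\cup\{i'\})$ precisely when $F\cup\{i\}\cup G\in\Delta$ \emph{or} $F\cup\{j\}\cup G\in\Delta$; that is, this link is governed by the \emph{union} $\lk_\Delta(F\cup\{i\})\cup\lk_\Delta(F\cup\{j\})$ (with $i,j$ deleted), not the intersection. The intersection/union mix-up undercuts the rest of the paragraph, and the ensuing Euler-characteristic comparison is too vague to yield the claimed contradiction: an improperly glued union of two PL spheres or balls need not have a detectably wrong Euler characteristic in all dimensions, so one has to argue more carefully (e.g.\ choose $F$ maximal in $(\lk_\Delta(i)\cap\lk_\Delta(j))\setminus\lk_\Delta(\{i,j\})$ and analyze the homology of the resulting link, or show directly that $|\Delta|$ and $|\Delta'|$ fail to be homeomorphic).

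In the sufficiency direction the identification $\st_\Delta(i)\cap\st_\Delta(j)=\{i,j\}*\lk_\Delta(\{i,j\})$ under the link condition is correct, but the step ``hence $U$ is a PL ball by \Cref{lem: property of PL manifolds}(2)'' does not follow. That lemma requires the intersection of the two pieces to lie in $\partial\st_\Delta(i)\cap\partial\st_\Delta(j)$, and the bipyramid $\{i,j\}*\lk_\Delta(\{i,j\})$ contains faces through the cone point $i$, none of which lie in $\partial\st_\Delta(i)=\lk_\Delta(i)$ when $i$ is interior. You need a finer decomposition, for instance writing $\st_\Delta(i)=\bigl(\{i,j\}*\lk_\Delta(\{i,j\})\bigr)\cup\bigl(i*A_i\bigr)$ with $A_i$ the Newman complement of $j*\lk_\Delta(\{i,j\})$ in $\lk_\Delta(i)$, and similarly for $j$; then the three pieces do meet along boundary PL balls and the gluing lemma applies. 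Your subsequent idea of collapsing the bipyramid to the cone $i'*\lk_\Delta(\{i,j\})$ while fixing the common boundary is the right local picture, but it needs this corrected ball structure to stand on, and the verification that the local homeomorphism extends by the identity to $\overline{\Delta\setminus U}$ also deserves an explicit check that the link condition prevents any face outside $U$ from being identified.
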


\begin{lemma}\label{lem: condition}
	Let $\Delta'$ be a simplicial complex and let $\Delta$ be a simplicial refinement of $\Delta'\times[-1,1]$ induced by an l.a.o.\ of $\Delta'$. Then for any vertex $u\in \Delta'$, $$\lk_{\Delta}((u,1))\cap\lk_{\Delta}((u,-1))=\lk_{\Delta}(\{(u,1),(u,-1)\}).$$
\end{lemma}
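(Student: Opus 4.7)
The plan is to reduce the lemma to a careful description of the simplices of $\Delta$ as ``staircase'' simplices over the l.a.o.\ $\ell$ of $\Delta'$, and then to argue by a direct combinatorial comparison.

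First I would recall the explicit description of $\Delta = (\Delta' \times [-1,1])^\ell$ implicit in \Cref{triangulation of prism}: for each face $F$ of $\Delta'$, the restriction of $\ell$ to $F$ is acyclic and thus induces a total order $v_0 \to v_1 \to \cdots \to v_k$ on the vertices of $F$; the prism $F \times [-1,1]$ is subdivided into the simplices
\[\tau_j = \{(v_0,1),\ldots,(v_j,1),(v_j,-1),\ldots,(v_k,-1)\}, \qquad j=0,\ldots,k.\]
Combining over all $F$, a subset $S \subseteq V(\Delta) = V(\Delta')\times\{-1,1\}$ is a face of $\Delta$ if and only if, writing $S = V^+\times\{1\}\cup V^-\times\{-1\}$, the projection $F := V^+ \cup V^-$ is a face of $\Delta'$ and every vertex of $V^+$ is $\leq$ every vertex of $V^-$ in the total order on $F$ induced by $\ell$. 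I would record this as a preliminary observation.

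The inclusion $\lk_\Delta(\{(u,1),(u,-1)\}) \subseteq \lk_\Delta((u,1))\cap \lk_\Delta((u,-1))$ is immediate from the definition of the link. For the reverse inclusion, pick $\sigma \in \lk_\Delta((u,1))\cap \lk_\Delta((u,-1))$ and write $\sigma = \sigma^+\times\{1\}\cup \sigma^-\times\{-1\}$, with $u \notin \sigma^+ \cup \sigma^-$. Apply the preliminary observation to $\sigma \cup \{(u,1)\}$ and to $\sigma \cup \{(u,-1)\}$: both give the same projection $F = \sigma^+ \cup \sigma^- \cup \{u\}$, which is therefore a face of $\Delta'$, and the two conditions yield $\max \sigma^+ \leq u$ and $u \leq \min \sigma^-$ in the total order on $F$.

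Now consider $\sigma \cup \{(u,1),(u,-1)\}$, i.e.\ take $V^+ = \sigma^+ \cup \{u\}$ and $V^- = \sigma^- \cup \{u\}$. Their union is again $F$, and we have $\max V^+ = u = \min V^-$, so the staircase condition is satisfied; hence $\sigma \cup \{(u,1),(u,-1)\}$ is a face of $\Delta$, which means $\sigma \in \lk_\Delta(\{(u,1),(u,-1)\})$. This completes the proof.

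The only subtle point is verifying the ``staircase characterization'' of the faces of $\Delta$ — I expect this to be the main obstacle, since \Cref{triangulation of prism} is stated in terms of a bijection rather than an explicit description. Once this characterization is in place, the rest is a straightforward chase of inequalities, and in particular no interior vertices or issues with the ambient geometry arise.
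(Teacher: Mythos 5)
Your proof is correct and takes essentially the same approach as the paper. The paper's proof observes that the refinement of $F\times[-1,1]$ is flag for each $F\in\Delta'$ (a subset of $V(F\times[-1,1])$ is a face iff every pair of its vertices is an edge), and then the inclusion $\lk_\Delta((u,1))\cap\lk_\Delta((u,-1))\subseteq\lk_\Delta(\{(u,1),(u,-1)\})$ is immediate; you instead unpack the staircase description of the faces explicitly and verify the order inequalities by hand. These are the same observation about the staircase triangulation, packaged in two equivalent ways, and the resulting arguments are interchangeable.
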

\begin{proof}
	The triangulation of the prism induced by an l.a.o.\ has the following key property: for every $F\in\Delta'$, a subset $G=\{(w_1,t_1),\dots,(w_k,t_k)\}$ of $V(F\times [-1,1])$ is a face of $\Delta$ if and only if any two vertices  $(w_i,t_i),(w_j,t_j)$ form an edge of $\Delta$. Therefore if both $G\cup \{(u,-1)\}$ and $G\cup \{(u,1)\}$ are faces of $\Delta$, then $G\cup \{(u,-1),(u,1)\}$ is also a face of $\Delta$. This proves that $\lk_{\Delta}((u,1))\cap\lk_{\Delta}((u,-1))\subseteq\lk_{\Delta}(\{(u,1),(u,-1)\})$. Since the other inclusion holds in general, the claim follows.
\end{proof}

We underline that \Cref{lem: condition} clearly does not imply that the simplicial complex obtained from $\Sigma$ by contracting $\{(u,1),(u,-1)\}$, $u\in S$, is PL homeomorphic to $\Sigma$. Indeed, \Cref{lem eran} holds for PL manifolds \emph{without boundary}.
	
	\begin{proposition}\label{cor: phi is sphere}
		The simplicial complex $\Phi$ obtained from $\Phi'$ by contracting every edge of the form $\{(u,1),(u,-1)\}$, $u\in V(D)\cup V(\sigma(D))$, is a cs PL $d$-sphere. 
	\end{proposition}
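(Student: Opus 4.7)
The plan is to invoke Nevo's edge-contraction theorem (\Cref{lem eran}) iteratively on $\Phi'$, which is already known to be a cs PL $d$-sphere. I would contract the antipodal pair $\{(u,1),(u,-1)\}$ and $\{(\sigma(u),1),(\sigma(u),-1)\}$ simultaneously: because $\sigma$ is a free involution these two edges share no vertex, so the contraction is unambiguous and descends to a free involution on the quotient, keeping the complex centrally symmetric at every stage. Each successful contraction produces a PL homeomorphic complex by \Cref{lem eran}, and after all prescribed edges are contracted one arrives at $\Phi$.

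The heart of the proof is the link-condition check $\lk_{\Phi'}((u,1))\cap \lk_{\Phi'}((u,-1))=\lk_{\Phi'}(\{(u,1),(u,-1)\})$ for a fixed $u\in V(D)$; the case $u\in V(\sigma(D))$ is symmetric. Here I would combine two combinatorial facts that follow from conditions ii and iii of Property $\mathrm{P}_{d-1}$, namely $V(D)\cap V(S_{d-2})=\emptyset$ and $V(D)\cap V(\sigma(B))=\emptyset$, with the explicit decomposition of $\Phi'$. These imply that $(u,1)$ lies only in the cone $v_+*K_+$ among the four added pieces, and $(u,-1)$ lies only in $w_-*L_-$. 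Consequently $\lk_{\Phi'}((u,1))=\lk_\Sigma((u,1))\cup \{v_+\}*\lk_{K_+}((u,1))$ and $\lk_{\Phi'}((u,-1))=\lk_\Sigma((u,-1))\cup \{w_-\}*\lk_{L_-}((u,-1))$; since $v_+\notin \lk_{\Phi'}((u,-1))$ and $w_-\notin \lk_{\Phi'}((u,1))$, the intersection collapses to $\lk_\Sigma((u,1))\cap \lk_\Sigma((u,-1))$. By \Cref{lem: condition} this equals $\lk_\Sigma(\{(u,1),(u,-1)\})$, which in turn equals $\lk_{\Phi'}(\{(u,1),(u,-1)\})$ because the edge in question lies only in $\Sigma$.

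The main obstacle is propagating the link condition through the iterative contractions: after one antipodal pair is contracted to vertices $u^*,\sigma(u)^*$, the condition has to be re-verified for the next pair in the updated complex. I would handle this by appealing to the order-independence of disjoint edge contractions noted in the excerpt just before the proposition, together with the local nature of \Cref{lem: condition}: for distinct $u,u'\in V(D)\cup V(\sigma(D))$, any coincidence in $\lk((u',1))\cap \lk((u',-1))$ created by the new vertex $u^*$ traces back to adjacencies already present in the prism over the edge $\{u,u'\}\subseteq S$, and the l.a.o.-induced triangulation of that prism provides the spanning $2$-face that places $u^*$ into $\lk(\{(u',1),(u',-1)\})$. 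Thus each iterated link-condition check reduces to an instance of the base case. After all antipodal pairs have been contracted in this fashion, the resulting complex is $\Phi$, a cs PL $d$-sphere.
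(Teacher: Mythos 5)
Your approach is the same as the paper's: iterate Nevo's edge-contraction theorem, contracting antipodal pairs simultaneously and checking the link condition at each step. The base case is handled carefully and correctly: the explicit decomposition showing that $(u,1)$ meets only $v_+*K_+$ among the four cones and $(u,-1)$ only $w_-*L_-$ is a slightly more detailed version of the paper's remark that each of $v_\pm,w_\pm$ is adjacent to at most one of $(u,1),(u,-1)$, and the reduction to $\Sigma$ via \Cref{lem: condition} matches.

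The gap is in the iteration step, which you yourself identify as the main obstacle. Your argument treats only the \emph{vertex} $u^*$ (the image of a previously contracted pair) entering $\lk((u',1))\cap\lk((u',-1))$ and finding a single $2$-simplex placing it into $\lk(\{(u',1),(u',-1)\})$; but the link condition is an equality of \emph{simplicial complexes}, so one must also account for every higher-dimensional face $F\ni u^*$ of $\lk_{\Phi_{i-1}}((u',1))\cap\lk_{\Phi_{i-1}}((u',-1))$ and show $F\cup\{(u',1),(u',-1)\}$ is a face of $\Phi_{i-1}$, a complex that is no longer an l.a.o.\ prism. The phrase "reduces to an instance of the base case" asserts precisely the stability of these links under earlier contractions without establishing it. The paper supplies the missing ingredient: for distinct $i,j$ the set $e_i\cup e_j=\{(v_i,\pm1),(v_j,\pm1)\}$ is never a face, because the locally acyclic orientation always leaves one of the two diagonals between $(v_i,\pm1)$ and $(v_j,\pm1)$ out; this non-face observation is what lets one argue that both $\lk_{\Phi_{i-1}}((v_i,1))\cap\lk_{\Phi_{i-1}}((v_i,-1))$ and $\lk_{\Phi_{i-1}}(e_i)$ agree with their counterparts in $\Phi_0$, so that the condition already verified in $\Phi_0$ carries over. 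Your proposal needs this (or an equivalent) concrete control on faces, not just on vertices, before the iteration is sound.
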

	\begin{proof}
		Let $V(D)=\{v_1, \dots, v_k\}$, $e_i=\{(v_i,1),(v_i,-1)\}$ for $i=1,\dots,k$. Let 
		$$\Phi'=\Phi_0\overset{e_1, \sigma(e_1)}{\longrightarrow}\Phi_1\overset{e_2, \sigma(e_2)}{\longrightarrow}\dots{\longrightarrow}\Phi_{k-1}\overset{e_{k},\sigma(e_{k})}{\longrightarrow}\Phi_k=\Phi$$
		be the sequence of complexes obtained from $\Phi'$ by contracting a pair of antipodal edges $e_i, \sigma(e_i)$ at a time. Since every vertex in $V(\Phi')\setminus V(\Sigma)$ is connected to at most one vertex from $\{(w, 1), (w,-1)\}$, where $w\in S$, we have that for every vertex $v\in S$, $$\lk_{\Phi'}((v,1))\cap\lk_{\Phi'}((v,-1))=\lk_{\Sigma}((v,1))\cap\lk_{\Sigma}((v,-1))$$
		$$\mathrm{and}\quad \lk_{\Phi'}(\{(v,1),(v,-1)\})=\lk_{\Sigma}(\{(v,1),(v,-1)\}). $$ By \Cref{lem eran}, \Cref{lem: condition}, and the fact that the links of $e_1$ and $\sigma(e_1)$ in $\Sigma$ are antipodal and disjoint, it follows that $\Phi_1$ is a cs PL $d$-sphere. 
Note that $e_i\cup e_j$ is not a face in $\Phi_0$ or $\Phi_1, \dots, \Phi_{i-1}$ for any distinct $i$ and $j$; in other words, at least one edge between $(v_j, \pm 1)$ and $(v_i, \pm 1)$ is missing. Hence $e_j \notin \lk_{\Phi_{k}}((v_i, 1))\cap \lk_{\Phi_{k}}((v_i, -1))$ for any $0\leq k\leq i-1$ and inductively, $$\lk_{\Phi_{i-1}}((v_i, 1))\cap \lk_{\Phi_{i-1}}((v_i, -1))=\lk_{\Phi_{i-2}}((v_i, 1))\cap \lk_{\Phi_{i-2}}((v_i, -1))=\dots =\lk_{\Phi_0}((v_i, 1))\cap \lk_{\Phi_0}((v_i, -1)).$$
Similarly, $e_j$ is not an edge in $\lk_{\Phi_{i-1}}(e_i), \lk_{\Phi_{i-2}}(e_i),\dots,\lk_{\Phi_{0}}(e_i)$ for any $i\neq j$ and hence $\lk_{\Phi_{i-1}}(e_i)=\lk_{\Phi_{0}}(e_i)$. This fact implies the first and third equalities in the following equation:
		\begin{align*}
		\lk_{\Phi_{i-1}}((v_{i},1))\cap\lk_{\Phi_{i-1}}((v_{i},-1))&= \lk_{\Phi_0}((v_{i},1))\cap\lk_{\Phi_0}((v_{i},-1))\\
		&=\lk_{\Phi_0}(e_i)\\
		&= \lk_{\Phi_{i-1}}(e_i),
		\end{align*}
		while the second equality follows from \Cref{lem eran}. Again by \Cref{lem eran} and the fact that $\lk_{\Phi_{i-1}}(e_i)$ and $\lk_{\Phi_{i-1}}(\sigma(e_i))$ are antipodal and disjoint, we conclude that every $\Phi_i$ is a cs PL $d$-sphere. 
	\end{proof}
	\begin{remark}\label{rem: more contractions}
		In fact, we can contract more edges $\{(u,-1),(u,1)\}$ and their antipodes with $u\in V(S)$ and still obtain a cs PL $d$-sphere. However, contracting too many edges would create induced cs 4-cycles in the resulting complex.
	\end{remark}
	\begin{remark}
		By Corollary \ref{cor: exists triangulation prism}, the cs $d$-sphere $\Phi$ that we define is usually not unique, but it depends on the l.a.o.\ considered. Indeed, the number of combinatorial types of $\Phi$ is related to the number of l.a.o.\ on $S$ that satisfy the conditions in Lemma \ref{lem: l.a.o. for S}. With a slight abuse of notation we write ``a sphere $\Phi$" to indicate any PL $d$-sphere that could be constructed from $S$ as in this section.
	\end{remark}
	
	\section{The induction step}
	In this section, we show that the sphere  $\Phi$ constructed in the previous section satisfies Property $\mathrm{P}_d$ w.r.t. a certain flag of spheres $S_0\subset S_1\subset \dots \subset S_{d-1}=S\subset S_d=\Phi$. Finally, we show that if $S_{d-1}$ does not have induced cs $4$-cycles then the same holds for $S_d$. Using this fact, together with the initial cases, i.e., the $0$-dimensional sphere $S_0$ and the cs 6-cycle $S_1$, we prove \Cref{thm: exists cs with no cycle}. Assume that inductively we've constructed a sequence of cs $i$-spheres $S_{i}$, $0\leq i\leq d-1$, that satisfies Property $\mathrm{P}_{i}$ under the triple $(B_{i}, D_{i}, v_{i})$, that is
	\begin{itemize}
		\item $B_i$ is a PL $i$-ball in $S_i$ with boundary $S_{i-1}$.
		\item $S_i\setminus S_{i-1}=D_i\uplus\sigma(D_i)$ for some PL $i$-ball $D$.
		\item $V(\st_{S_i}(v_i))\cup V(\st_{S_i}(\sigma(v_i)))=V(S_i)$.		
	\end{itemize}
	We will now show that the PL $d$-sphere $\Phi$ constructed in the previous section by setting $\Gamma$ to be the image of $S_{d-1}\times\{0\}$ w.r.t. the map $\psi$ in \eqref{eq: 3.1} (consequently, $\Gamma$ satisfies Property $\mathrm{P}_d$). With the notation introduced earlier we define:
	\begin{itemize}
		\item $D_d:= \{v_{+},w_{+}\}*\st_{S_{d-2}\times\{1\}}((\sigma(v_{d-2}),1))$.
		\item $B_d$ is the closure of one of the two connected components of $\Phi-\Gamma:=\{F\in \Phi\;|\;F\cap \Gamma=\emptyset\}$. 
		\item $v_d:=v_+$.
	\end{itemize}
	\begin{remark}
		By Jordan's theorem, the geometric realization of $\Phi -\Gamma$ consists of two connected components. Since $\Phi$ and $\Gamma$ are PL spheres, it is known that $B_d$ is a simplicial ball \cite[Theorem 6]{Newm}. However, it is an open problem in PL topology (known as PL Schoenflies problem) to decide whether $B_d$ is also a PL ball. The following lemma gives a positive answer in the special case of our construction.
	\end{remark}
	\begin{figure}[h]
		\centering
		\includegraphics[scale=0.9]{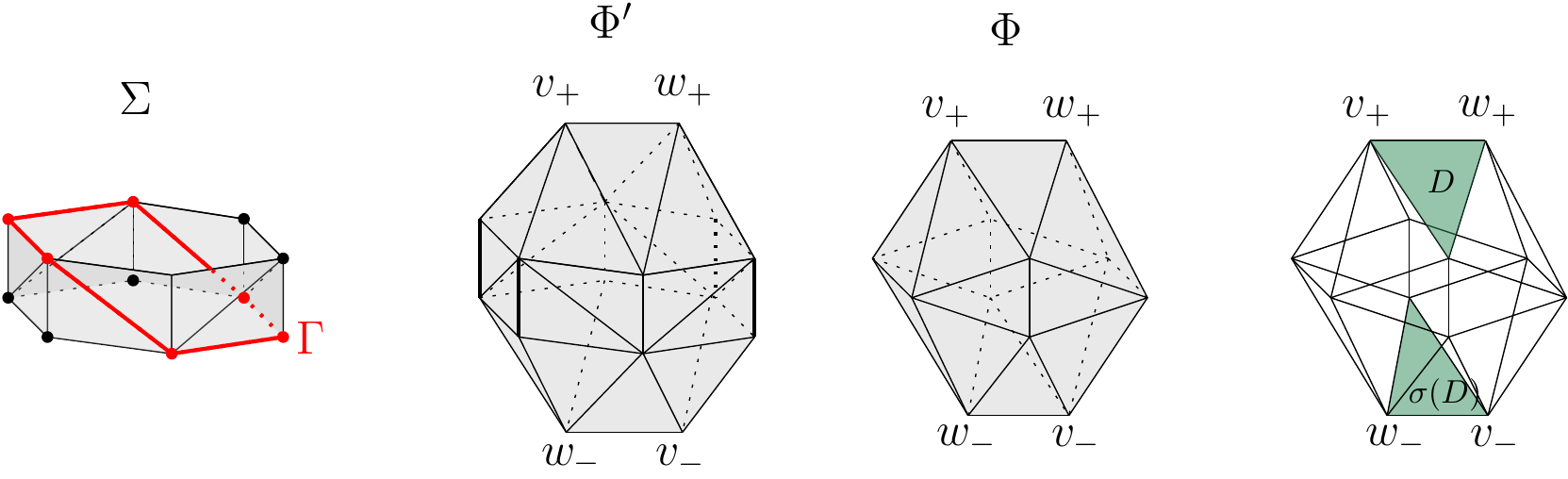}
		\caption{An illustration of \Cref{thm: main} in the case $d=2$}
		\label{fig: rp2}
	\end{figure}
	\begin{lemma}
		The triple $(B_d, D_d, v_d)$ satisfies the following properties: 
		\begin{itemize}
			\item $B_d$ is a PL $d$-ball in $\Phi$ with boundary $\Gamma\cong S_{d-1}$.
			\item $D_d$ is isomorphic to $\{v_d, w_{+}\}* \st_{S_{d-2}}(v_{d-2})$. 
			\item $D_d\subseteq \st_{\Phi}(v_d) \subseteq B_d$ and  $V(\Phi)=V(\st_{\Phi}(v_d))\uplus V(\st_{\Phi}(\sigma(v_d)))$.
		\end{itemize}
		In particular, $\Phi$ satisfies Property $\mathrm{P}_d$ under the triple $(B_d, D_d, v_d)$.
	\end{lemma}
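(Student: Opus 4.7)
The plan is to verify the four assertions in turn; the first three are essentially bookkeeping from the construction of $\Phi$, while the PL ball structure of $B_d$ is the technical heart of the lemma.

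\emph{Second bullet.} By construction, $D_d=\{v_+,w_+\}*\st_{S_{d-2}\times\{1\}}((\sigma(v_{d-2}),1))$. The projection $u\mapsto(u,1)$ identifies $S_{d-2}$ with $S_{d-2}\times\{1\}$, and the cs involution on $S_{d-2}$ yields $\st_{S_{d-2}}(\sigma(v_{d-2}))\cong\st_{S_{d-2}}(v_{d-2})$. Combining, $D_d\cong\{v_d,w_+\}*\st_{S_{d-2}}(v_{d-2})$. Since $\st_{S_{d-2}}(v_{d-2})$ is a PL $(d-2)$-ball (star of a vertex in a PL sphere) and $\{v_d,w_+\}$ is a 1-simplex, \Cref{lem: property of PL manifolds}(1) gives that $D_d$ is a PL $d$-ball.

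\emph{Third bullet and vertex partition.} A direct inspection of \eqref{eq: phi'} shows that $v_+$ appears only in $v_+*K_+$ and in $v_+*(\partial B\times\{1\})\subseteq L_+\subseteq w_+*L_+$, whence
$$\st_\Phi(v_+)=v_+*\bigl((B\times\{1\})\cup w_+*(S_{d-2}\times\{1\})\bigr).$$
A face-by-face check confirms $D_d\subseteq\st_\Phi(v_+)$. Since $v_+\notin V(\Gamma)$, every face containing $v_+$ avoids $\Gamma$ and lies in the component of $\Phi-\Gamma$ containing $v_+$, whose closure is $B_d$; hence $\st_\Phi(v_+)\subseteq B_d$. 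Enumerating, $V(\st_\Phi(v_+))=\{v_+,w_+\}\cup V(B\times\{1\})$ and its antipode $V(\st_\Phi(v_-))=\{v_-,w_-\}\cup V(\sigma(B)\times\{-1\})$ together partition $V(\Phi)$, using $V(B)\cap V(\sigma(B))=V(S_{d-2})$ and that for $u\in V(S_{d-2})$ no contraction occurs in $\Phi$ so $(u,1)\neq(u,-1)$ there.

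\emph{First bullet (PL ball structure of $B_d$).} The remark before the lemma emphasizes that general PL Schoenflies is open, so we must exploit the explicit structure of $\Phi$. I would realize $B_d$ as the image under the contraction map $\phi\colon\Phi'\to\Phi$ of $B'_+:=\Sigma_+\cup v_+*K_+\cup w_+*L_+$, where $\Sigma_+\subseteq\Sigma$ denotes the ``upper half'' of the prism triangulation (the induced subcomplex on $V(S\times\{1\})\cup V(\Gamma)$), a PL $d$-manifold with boundary $\Gamma\sqcup(S\times\{1\})$ by \Cref{cor: exists triangulation prism} and \Cref{lem: is PL manifold}. The cap $T:=v_+*K_+\cup w_+*L_+$ is a PL $d$-ball: $v_+*K_+$ is a cone over the PL $(d-1)$-ball $K_+$ and $w_+*L_+$ is a cone over the PL $(d-1)$-sphere $L_+$, and their intersection equals the PL $(d-1)$-ball $v_+*(S_{d-2}\times\{1\})$, to which \Cref{lem: property of PL manifolds}(2) applies. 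The gluing $B'_+=\Sigma_+\cup T$ is along the full boundary $(d-1)$-sphere $S\times\{1\}$ of both, topologically capping the ``top'' boundary component of the cylinder $\Sigma_+$ into a $d$-ball.

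The main obstacle is promoting this topological identification to a PL one. The strategy is to use the explicit l.a.o.\ structure producing $\Sigma_+$: this allows a shelling of $\Sigma_+$ starting from the boundary $S\times\{1\}$ and progressing toward $\Gamma$, so that attaching the $d$-simplices of $\Sigma_+$ to $T$ in shelling order, each attachment occurs along a PL $(d-1)$-ball (a face of the new $d$-simplex), preserving the PL ball property via \Cref{lem: property of PL manifolds}(2). This shows $B'_+$ is a PL $d$-ball with boundary $\Gamma$. Finally, the contractions from $B'_+$ to $B_d$ involve only the edges $\{(u,1),(u,-1)\}$ with $u\in V(\sigma(D))$ (those lying entirely in $B'_+$); each satisfies the link condition of \Cref{lem: condition} when restricted to $B'_+$, so \Cref{lem eran} implies $B_d=\phi(B'_+)\pl B'_+$, yielding a PL $d$-ball with $\partial B_d=\phi(\Gamma)=\Gamma$. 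Combining the four bullet points then confirms that $\Phi$ satisfies Property $\mathrm{P}_d$ under the triple $(B_d,D_d,v_d)$.
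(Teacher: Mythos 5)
Your handling of the second and third bullets is correct and essentially matches the paper's (the paper treats the second bullet as immediate and verifies the third by a similar vertex count). The discrepancy is in the first bullet, which is the technical heart, and there your argument has a genuine gap.

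You build $B_d$ by gluing the cap $T = v_+*K_+ \cup w_+*L_+$ to the ``upper half prism'' $\Sigma_+$, and you correctly show that $T$ is a PL $d$-ball with $\partial T = S\times\{1\}$. But the gluing $T\cup\Sigma_+$ is along the entire $(d-1)$-sphere $S\times\{1\}$, so \Cref{lem: property of PL manifolds}(2) does not apply (it requires the overlap to be a PL \emph{ball}). Your workaround --- shell $\Sigma_+$ starting from $S\times\{1\}$ and attach its facets to $T$ one at a time --- is asserted, not proved. Nothing in the construction guarantees that the prism triangulation $(S\times[-1,1])^\ell$ cut along $\Gamma$ is shellable, nor that such a shelling, if it exists, can be ordered so that every partial intersection $T'\cap\sigma_i$ is a PL $(d-1)$-ball lying on both boundaries; triangulated cylinders $\Sp^{d-1}\times[0,1]$ are not automatically shellable, and this is precisely the kind of unconstructive step the ambient PL Schoenflies problem warns against. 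There is also a secondary issue: your $\Sigma_+$ is defined as an induced subcomplex of $\Sigma$, whereas $B_d$ is the closure of a component of $\Phi-\Gamma$ \emph{after} the edge contractions; some of those contractions merge a vertex of your $\Sigma_+$ with a vertex outside it, so the restriction-of-contraction step needs more care than ``each satisfies the link condition when restricted to $B'_+$.''

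The paper sidesteps all of this with a cleverer move. It performs \emph{additional} edge contractions $\{(v,1),(v,-1)\}$ for $v\in V(\st_\Phi(v_d))$, obtaining a complex $\Phi^*$ which \Cref{lem: condition} and \Cref{lem eran} certify is still a PL $d$-sphere, and in which the image of $\Sigma\cup v_+*K_+\cup w_+*L_+$ visibly becomes a cone structure (hence a PL $d$-ball). Then Newman's theorem (\Cref{lem: newmann}) immediately gives that its complement $B_d$ is a PL $d$-ball, with no shelling or collar argument needed. Your cap-plus-cylinder decomposition is the right topological picture, but to make it a PL argument you would need to supply the missing shelling (or an alternative such as a collar/regular-neighborhood argument), whereas the paper's contraction-plus-Newman route avoids the issue entirely.
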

	\begin{proof}
		We only need to verify the first and third bullet points. 
		By the inductive hypothesis and the definition of $D_d$, we obtain
		\begin{equation*}
		\begin{split}
		V(\st_{ \Phi}(v_d))&=V(B_{d-1}\times \{1\})\cup\{v_d, w_{+}\}\\
		&=\Big( V(D_{d-1}\times \{1\})\cup V(\st_{S_{d-2}\times \{1\}}(v_{d-2},1))\Big)  \cup \Big( V(\st_{S_{d-2}\times \{1\}}(\sigma(v_{d-2}), 1))\cup\{v_d, w_{+}\}\Big) \\
		&=V(\st_{\Gamma}(\sigma(v_{d-1})))\cup  V(D_{d}).
		\end{split}
		\end{equation*}
		Therefore, 
		\begin{align*}
		V(\st_{\Phi}(v_d))\uplus V(\st_{\Phi}(\sigma(v_d)))&=V(\st_{\Gamma}(v_{d-1}))\uplus V(\st_{\Gamma}(\sigma(v_{d-1})))\uplus V(D_d) \uplus V(\sigma(D_d))\\
		&= V(\Gamma)\uplus V(D_d) \uplus V(\sigma(D_d))\\
		& = V(\Phi),
		\end{align*}
		where the second equality follows from \Cref{def: P_d} iii on $\Gamma$.
		
		To see that $B_d$ is a PL $d$-ball, we consider the simplicial complex $\Phi^*$ obtained from $\Phi$ by contracting all the edges of the form $\{(v,-1),(v,1)\}$ with $v\in V(\st_{\Phi}(v_d))$. By \Cref{lem: condition} and \Cref{lem eran}, $\Phi^*$ is a PL $d$-sphere. The image of $\Sigma \cup (v_{+} * K_+)\cup (w_{+} * L_+)$ is a PL $d$-ball, since $K_+$ and $L_+$ are a PL $(d-1)$-ball and a $(d-2)$-sphere respectively. It follows from \Cref{lem: newmann} that $B_d$ is a PL $d$-ball, since it can be decomposed as the (closure of) the complement of the PL $d$-ball $\Sigma \cup (v_{+} * K_+)\cup (w_{+} * L_+)$ w.r.t. the PL $d$-sphere $\Phi^*$.
		
		
		Finally, by the definition, we have that $B_d\setminus \Gamma=D_d$ and $D_d\subseteq \st_\Phi(v_d)\subseteq B_d$. Furthermore, $V(\st_\Phi(v_d))\cup V(\st_\Phi(\sigma(v_d))=V(\Phi)$ follows from the inductive assumption that $V(\st_\Gamma(v_{d-1}))\cup V(\st_\Gamma(\sigma(v_{d-1})))=V(\Gamma)$.
	\end{proof}
	
	The property which motivates our construction is the content of the following lemma.
	\begin{proposition}\label{prop: no induced 4}
		The complex $\Phi$ as constructed above has no induced cs $4$-cycle if $\Gamma\cong S_{d-1}$ has no induced cs $4$-cycle.
	\end{proposition}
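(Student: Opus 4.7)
The plan is to argue by contradiction: suppose $\Phi$ contains an induced cs $4$-cycle $C$ on vertex set $\{a, b, \sigma(a), \sigma(b)\}$, so that $b$ is a common neighbor in $\Phi$ of the antipodal pair $\{a, \sigma(a)\}$.

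I first rule out the case that any vertex of $C$ lies in the apex set $\{v_+, v_-, w_+, w_-\}$. Directly from \eqref{eq: phi'} and the contractions of \Cref{cor: phi is sphere}, the neighbors of $v_+$ in $\Phi$ are contained in the image of $V(B \times \{1\}) \cup \{w_+\}$, while those of $v_-$ are contained in the image of $V(\sigma(B) \times \{-1\}) \cup \{w_-\}$. Since $V(B) \cap V(\sigma(D)) = \emptyset = V(\sigma(B)) \cap V(D)$, and the contractions do not identify $(u, 1)$ with $(u, -1)$ for $u \in V(S_{d-2})$, the two neighborhoods are disjoint; an analogous computation handles the $w_\pm$ pair. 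Since $b$ is a common neighbor of $a$ and $\sigma(a)$, this disjointness rules out $\{a, \sigma(a)\} = \{v_+, v_-\}$ or $\{w_+, w_-\}$, and swapping the roles of the cycle edges rules out $\{b, \sigma(b)\}$ being one of these pairs. Therefore $a, b, \sigma(a), \sigma(b)$ all come from the prism part of $\Phi$.

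Now introduce the label projection $\pi : V(\Phi) \setminus \{v_\pm, w_\pm\} \to V(S)$ by $[(u, \epsilon)] \mapsto u$; this is well-defined because the contractions only identify vertices within a single column $\{(u, 1), (u, -1)\}$, and it is equivariant in the sense that $\pi \circ \sigma = \sigma \circ \pi$. The crux of the argument is the following claim: \emph{if $\{x, y\}$ is an edge of $\Phi$ among prism vertices with $\pi(x) \neq \pi(y)$, then $\{\pi(x), \pi(y)\}$ is an edge of $S$}. To see this, observe that every such $\Phi$-edge lifts to an edge of $\Sigma = (S \times [-1, 1])^\ell$, and the edges of $\Sigma$ fall into three types: layer edges $\{(u, \epsilon), (w, \epsilon)\}$ with $\{u, w\} \in S$; diagonal edges $\{(u, 1), (w, -1)\}$ with $u \to w$ in the l.a.o.\ (so again $\{u, w\} \in S$); and vertical edges $\{(u, 1), (u, -1)\}$. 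The first two types project onto edges of $S$, while the third has $\pi(x) = \pi(y)$, proving the claim.

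Applying the claim, I conclude by case analysis on the projections of $C$. Since $S$ has no induced cs $4$-cycle, it has no cs edge $\{v, \sigma(v)\}$. If $\pi(a) = \pi(b)$, the claim applied to $\{b, \sigma(a)\}$ (whose endpoints have distinct $\pi$-labels) yields $\{\pi(a), \sigma(\pi(a))\} \in S$, a contradiction; the case $\pi(a) = \sigma(\pi(b))$ is symmetric via the edge $\{a, b\}$. Otherwise all four projections $\pi(a), \pi(b), \sigma(\pi(a)), \sigma(\pi(b))$ are distinct, and the claim applied to the four edges of $C$ produces a cs $4$-cycle in $S$ on this vertex set, which is induced since $S$ has no cs edges, again contradicting the hypothesis on $\Gamma \cong S_{d-1}$. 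The most delicate point will be the projection claim, where I must verify carefully that the contractions of \Cref{cor: phi is sphere} only identify vertices within single columns and hence do not create $\Phi$-edges whose endpoints project to non-adjacent pairs in $S$.
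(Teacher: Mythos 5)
Your proposal is correct, and it takes a genuinely different and arguably cleaner route than the paper's proof. The paper argues vertex-type by vertex-type: it first rules out $v_\pm,w_\pm$ via the disjointness of their links, then rules out every vertex of $S_{d-2}\times\{\pm 1\}$ from belonging to an induced cs $4$-cycle by a local neighborhood analysis that invokes the inductive hypothesis on \emph{both} $S_{d-2}$ and $S_{d-1}$, and finally observes that any remaining cs $4$-cycle would have to live inside $\Gamma\cong S_{d-1}$. Your proof instead introduces the equivariant label projection $\pi$ on prism vertices and proves the clean structural fact that every $\Phi$-edge between prism vertices either collapses vertically or projects to an edge of $S$ (since layer, diagonal, and vertical edges are the only edge types in a prism triangulation induced by an l.a.o., and edge contraction does not create phantom edges). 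Pushing the hypothetical cycle forward under $\pi$ then yields either a cs edge in $S$, impossible because $\sigma$ is a free involution, or an induced cs $4$-cycle in $S$, contradicting the hypothesis. What this buys you: (a) the argument only uses the hypothesis on $S=S_{d-1}$ and avoids invoking the no-cs-$4$-cycle property of $S_{d-2}$, so it localizes the inductive dependence; (b) the key claim is a single, easily verified lemma rather than a case-by-case neighborhood computation. One small remark: you justify ``$S$ has no cs edge $\{v,\sigma(v)\}$'' by appealing to the absence of induced cs $4$-cycles, but the correct justification is simply that $\sigma$ is a free involution on non-empty faces, so a cs edge would be a fixed face. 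This does not affect the validity of the argument, as the conclusion holds either way, but the stated reason is slightly off.
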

	\begin{proof}
		As we see from the construction, since $\lk_{\Phi}(v_+)\cap \lk_{\Phi}(v_-)=\lk_{\Phi}(w_+)\cap \lk_{\Phi}(w_-)=\{\emptyset\}$, it follows that none of the vertices $v_+, v_-, w_+,w_-$ belong to any induced cs $4$-cycle. Furthermore, any vertex $(a,1)\in S_{d-2}\times\{1\}$ is only adjacent to either  $v_+, w_+$, or its neighbors in $S_{d-1}\times\{1\}$, or some $(\sigma(b),-1)\in S_{d-2}\times\{-1\}$ where $\{\sigma(a),\sigma(b)\}\in S_{d-2}\times\{-1\}$. By the inductive hypothesis, $S_{d-2}$ and $S_{d-1}$ do not contain any induced cs $4$-cycle. We conclude that $(a,1)$ is also not in any induced cs $4$-cycle in $S$. However, any cs $4$-cycle must include two vertices outside $D_d\cup\sigma(D_d)$. Hence $\Phi$ has no induced cs $4$-cycle.
	\end{proof}
	
	Finally, we conclude with the proof of the main result.
	\begin{theorem}\label{thm: main}
		There exists a family of cs PL $i$-spheres $S_0\subset S_1 \subset S_2\dots$ such that each $S_i$ satisfies Property $\mathrm{P}_i$ with respect to $(B_i, D_i, v_i)$, where $\partial B_i=S_{i-1}$, and each $S_i$ has no induced cs 4-cycles. Furthermore $f_0(S_{i+1})=f_0(S_i)+f_0(S_{i-1})+4$ for $i\leq 1$.
	\end{theorem}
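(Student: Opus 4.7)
The plan is to proceed by induction on $d$. For the base, take $S_0$ to be two antipodal vertices (with $B_0$ consisting of a single vertex and $D_0=B_0$) and $S_1$ the centrally symmetric $6$-cycle from \Cref{ex: S_1}, both visibly satisfying the relevant properties with no induced cs $4$-cycles. These also agree with the asserted recursion, since $f_0(S_0)=2$ and $f_0(S_1)=6$.

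For the inductive step, suppose we have constructed $S_0\subset S_1\subset\cdots\subset S_{d-1}$ with each $S_i$ a cs PL $i$-sphere satisfying $\mathrm{P}_i$ with triple $(B_i,D_i,v_i)$ and $\partial B_i=S_{i-1}$, and no induced cs $4$-cycles. Applying the entire construction of Section 3 to $S:=S_{d-1}$ produces, via \Cref{prop: sigma}, \Cref{cor: exists triangulation prism}, and \Cref{cor: phi is sphere}, a cs PL $d$-sphere $\Phi$. Set $S_d:=\Phi$. The lemma preceding \Cref{thm: main} exhibits the triple $(B_d,D_d,v_d)$ and verifies that $S_d$ satisfies $\mathrm{P}_d$, with $\partial B_d=\Gamma\cong S_{d-1}$, where $\Gamma$ is the cs subcomplex supplied by \Cref{cor: exists triangulation prism} ii. Finally, \Cref{prop: no induced 4} gives the absence of induced cs $4$-cycles in $S_d$ from the inductive hypothesis on $S_{d-1}$.

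To verify the recursion, count vertices through the construction. The identifications defining $\Sigma''$ in \Cref{Sigma'} absorb the middle layer into the outer two, so $|V(\Sigma'')|=2f_0(S_{d-1})$, and the refinement $\Sigma$ introduces no new vertices by \Cref{cor: exists triangulation prism} iii. The four cone apices $v_\pm,w_\pm$ contribute $+4$, while each edge contraction at $\{(u,1),(u,-1)\}$ with $u\in V(D_{d-1})\cup V(\sigma(D_{d-1}))$ removes one vertex. Property $\mathrm{P}_{d-1}$ ii gives $V(S_{d-1})=V(D_{d-1})\uplus V(\sigma(D_{d-1}))\uplus V(S_{d-2})$, so the number of contracted edges equals $f_0(S_{d-1})-f_0(S_{d-2})$, whence $f_0(S_d)=2f_0(S_{d-1})+4-(f_0(S_{d-1})-f_0(S_{d-2}))=f_0(S_{d-1})+f_0(S_{d-2})+4$, as claimed.

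The main piece of bookkeeping worth flagging is that $\Gamma$ is only isomorphic to, not literally equal to, $S_{d-1}$; the inductive flag is maintained by identifying $\Gamma$ with $S_{d-1}$ inside $S_d$ and transporting all earlier triples $(B_i,D_i,v_i)$ for $i<d$ under this isomorphism. Once this identification is made, the ascending chain $S_0\subset\cdots\subset S_d$ is genuine and every condition of $\mathrm{P}_i$ for $i\leq d$ continues to hold, closing the induction.
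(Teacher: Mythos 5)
Your proof is correct and follows essentially the same approach as the paper: induction using the Section~3 construction, with \Cref{cor: phi is sphere}, the lemma preceding the theorem, and \Cref{prop: no induced 4} giving the sphere, the Property $\mathrm{P}_d$ data, and the absence of induced cs $4$-cycles respectively, and the same vertex count $2f_0(S_{d-1})+4-2f_0(D_{d-1})$ yielding the recursion. Your explicit remark about identifying $\Gamma$ with $S_{d-1}$ to make the chain $S_0\subset\cdots\subset S_d$ literal rather than up-to-isomorphism is a useful bookkeeping point the paper leaves tacit.
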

	\begin{proof}
		The first statement follows directly from \Cref{cor: phi is sphere} and \Cref{prop: no induced 4}. The number of vertices in the prism over $S_{i-1}$ equals $2f_0(S_{i-1})$, and together with $v_+, v_-, w_+,w_-$ sums up to $2f_0(S_{i-1})+4$. Identifying the vertices of $D_{i-1}\times\{\pm 1\}$ and $\sigma(D_{i-1})\times\{\pm 1\}$ decreases the number of vertices by $2f_0(D_{i-1})$. Since $f_0(S_{i-1})=2f_0(D_{i-1})+f_0(S_{i-2})$, the claim follows.
	\end{proof}
	
	\smallskip\noindent {\it Proof of \Cref{thm: exists cs with no cycle}:\ } Via \Cref{thm: main} we know there exists a PL $d$-sphere $S_d$ whose number of vertices $n_d$ is given by the sequence $n_0=2$, $n_1=6$, $n_{i+1}=n_{i}+n_{i-1}+4$. Solving the recursion we obtain the desired formula. The second statement  follows from a direct application of \Cref{lem: from sphere to rpd}.\hfill$\square$\medskip

	\begin{remark}
	Our inductive method produces the double cover of the minimal triangulation of $\R P^2$, the boundary of the icosahedron, from the double cover of the minimal triangulation of $\R P^1$, the 6-cycle; see Figure \ref{fig: rp2}. In dimension 3, we find two non-isomorphic PL triangulations of $\R P^3$ with the $f$-vector $(11,52,82,41)$, starting from the boundary of the icosahedron. They are vertex-minimal but not $f$-vectorwise minimal triangulations. For $d=4,5$, our construction is not vertex-minimal. We developed a naive implementation of our construction in the software {\tt Sage} and generated triangulations of $\R P^d$ as in \Cref{thm: main} up to $d=7$, and compute standard invariants (e.g., fundamental group, homology groups and pseudomanifold property) up to $d=6$. The code and the lists of facets of these triangulations can be found in \cite{GitRepo}. In \Cref{table} we report their $f$-vectors.
	\begin{table}[h!]
		\begin{center}
			\vspace{0pt}
			\begin{tabular}{  l | l }
				$d$ & $f$-vector \\ \hline
				1 & (3,3) \\
				2 & (6, 15, 10) \\
				3 & (11, 52, 82, 41)\\
				4 & (19, 151, 424, 485, 194)\\
				5 & (32, 403, 1797, 3536, 3165, 1055)\\
				6 & (53, 1022, 6811, 20545, 30919, 22701, 6486) \\
				7 & (87, 2514, 24099, 104628, 235599, 286041, 177864, 44466)
			\end{tabular}
		\end{center}
		\caption{$f$-vectors of a triangulation of  $\R P^d$ as in  \Cref{thm: exists cs with no cycle}.}
		\label{table}
	\end{table}

	\end{remark}
	\section{Open problems}
	We conclude this article with a few questions. The first one is about the (asymptotic) tight lower bound on the number of vertices required for a vertex-minimal triangulation of $\R P^d$.
	\begin{question}
		Does there exist a PL triangulation of $\mathbb{R}P^d$ with $\binom{d+2}{2}+\lfloor\frac{d-1}{2}\rfloor$ vertices for every $d\geq 1$? Does at least a construction with a number of vertices that is polynomial in $d$ exist?
	\end{question}
	We do not know if the PL spheres constructed in \Cref{thm: main} are polytopal, i.e., they can be realized as the boundary complex of a simplicial polytope. It is natural to ask the following question.
	\begin{question}
		What is the minimum number of vertices required for a cs simplicial $d$-polytope with no induced cs 4-cycles?
	\end{question}
	Frequently in the literature, additional combinatorial properties are imposed on a triangulation. We focus on two properties, namely flagness and balancedness. A simplicial complex is \emph{flag} if all minimal subsets of the vertices which do not form a face are edges. A $d$-dimensional simplicial complex is \emph{balanced} if there exists a simplicial projection (often called coloring) to the $d$-simplex which preserves the dimension of faces. Recently Bibby et al. \cite{BOWWZZ} and the first author in \cite{Ven} implemented local flips and transformations in order to obtain flag and balanced triangulations of manifolds which are vertex-minimal w.r.t. these properties. In particular the authors obtained a flag and balanced vertex-minimal triangulation of $\mathbb{R}P^2$ on $11$ and $9$ vertices respectively, and a balanced vertex-minimal triangulation of $\mathbb{R}P^3$ on $16$ vertices. For higher $d$, a flag and balanced triangulation of $\mathbb{R}P^d$ can be obtained by considering the barycentric subdivision of the boundary complex of the $(d+1)$-dimensional \emph{cross-polytope}, and identifying antipodal vertices. These simplicial complexes have $\frac{3^{d+1}-1}{2}$ vertices.
	\begin{problem}
		Construct flag or balanced PL triangulations of $\mathbb{R}P^d$ for every $d$ with less then $\frac{3^{d+1}-1}{2}$ vertices. Does a construction on a number of vertices that is polynomial in $d$ exist? 
	\end{problem}
	\section*{Acknowledgements}
	We would like to thank Basudeb Datta, Isabella Novik and Martina Juhnke-Kubitzke for helpful comments. We also would like to express our gratitude to the anonymous referees for providing references for results on minimal triangulated manifolds in other settings, as well as an insightful remark on the properness of group actions on PL manifolds. Their detailed feedback greatly helped us to improve earlier versions of the article. 

	{\small
		\bibliography{refs}
		\bibliographystyle{alpha}
	}
\end{document}